\batchmode
\makeatletter
\def\input@path{{\string"/Users/russw/Documents/Research/mypapers/Matchings, coverings, and Castelnuovo-Mumford regularity/\string"/}}
\makeatother
\documentclass[12pt,oneside,english]{amsart}
\usepackage[T1]{fontenc}
\usepackage[latin9]{inputenc}
\usepackage{geometry}
\geometry{verbose,tmargin=90pt,bmargin=90pt,lmargin=90pt,rmargin=90pt}
\usepackage{babel}
\usepackage{verbatim}
\usepackage{amsthm}
\usepackage{amssymb}
\usepackage[unicode=true,pdfusetitle,
 bookmarks=true,bookmarksnumbered=false,bookmarksopen=false,
 breaklinks=false,pdfborder={0 0 1},backref=false,colorlinks=false]
 {hyperref}

\makeatletter
\theoremstyle{plain}
\newtheorem{thm}{\protect\theoremname}
  \theoremstyle{remark}
  \newtheorem{rem}[thm]{\protect\remarkname}
  \theoremstyle{plain}
  \newtheorem{lem}[thm]{\protect\lemmaname}
  \theoremstyle{plain}
  \newtheorem{prop}[thm]{\protect\propositionname}
  \theoremstyle{plain}
  \newtheorem{cor}[thm]{\protect\corollaryname}
  \theoremstyle{definition}
  \newtheorem{example}[thm]{\protect\examplename}
  \theoremstyle{plain}
  \newtheorem{question}[thm]{\protect\questionname}

\usepackage{ae}
\usepackage{aecompl}

\makeatother

  \providecommand{\corollaryname}{Corollary}
  \providecommand{\examplename}{Example}
  \providecommand{\lemmaname}{Lemma}
  \providecommand{\propositionname}{Proposition}
  \providecommand{\questionname}{Question}
  \providecommand{\remarkname}{Remark}
\providecommand{\theoremname}{Theorem}

\begin{document}

\global\long\def\normalin{\mathrel{\lhd}}

\global\long\def\innormal{\mathrel{\rhd}}

\global\long\def\semidirect{\mathbin{\rtimes}}

\global\long\def\Stab{\operatorname{Stab}}

\global\long\def\bdry{\partial}

\global\long\def\susp{\operatorname{susp}}

\global\long\def\lrprod{\mathop{\check{\prod}}}

\global\long\def\lrtimes{\mathbin{\check{\times}}}

\global\long\def\urtimes{\mathbin{\hat{\times}}}

\global\long\def\urprod{\mathop{\hat{\prod}}}

\global\long\def\subsetdot{\mathrel{\subset\!\!\!\!{\cdot}\,}}

\global\long\def\dotsupset{\mathrel{\supset\!\!\!\!\!\cdot\,\,}}

\global\long\def\precdot{\mathrel{\prec\!\!\!\cdot\,}}

\global\long\def\dotsucc{\mathrel{\cdot\!\!\!\succ}}

\global\long\def\des{\operatorname{des}}

\global\long\def\rank{\operatorname{rank}}

\global\long\def\height{\operatorname{height}}

\global\long\def\modreln{\mathrel{M}}

\global\long\def\link{\operatorname{link}}

\global\long\def\freejoin{\mathbin{\circledast}}

\global\long\def\stellarsd{\operatorname{stellar}}

\global\long\def\conv{\operatorname{conv}}

\global\long\def\disjointunion{\mathbin{\dot{\cup}}}

\global\long\def\skel{\operatorname{skel}}

\global\long\def\depth{\operatorname{depth}}

\global\long\def\st{\operatorname{star}}

\global\long\def\alexdual#1{#1^{\vee}}

\global\long\def\reg{\operatorname{reg}}

\global\long\def\shift{\operatorname{Shift}}

\global\long\def\Dom{\operatorname{Dom}}

\global\long\def\cosetposet{\overline{\mathfrak{C}}}

\global\long\def\cosetlat{\mathfrak{C}}

\global\long\def\im{\operatorname{indmatch}}

\global\long\def\ind{\operatorname{Ind}}

\global\long\def\cochordcov{\operatorname{cochord}}

\global\long\def\boxicity{\operatorname{box}}

\global\long\def\edgeideal{\mathtt{I}}

\title[Matchings, coverings, and Castelnuovo-Mumford regularity]{Matchings, coverings,\linebreak{}
 and Castelnuovo-Mumford regularity }

\author{Russ Woodroofe}

\address{Department of Mathematics \& Statistics, Mississippi State University,
MS 39762}

\email{rwoodroofe@math.msstate.edu}

\subjclass[2000]{Primary 13F55, 05E45, 05C70.}
\begin{abstract}
We show that the co-chordal cover number of a graph $G$ gives an
upper bound for the Castelnuovo-Mumford regularity of the associated
edge ideal. Several known combinatorial upper bounds of regularity
for edge ideals are then easy consequences of covering results from
graph theory, and we derive new upper bounds by looking at additional
covering results.\vspace{-0.15in}

\end{abstract}
\maketitle

\section{\label{sec:Introduction}Introduction and background}

Let $G$ be a graph with vertex set $\{x_{1},\dots,x_{n}\}$, and
let $R=k[x_{1},\dots,x_{n}]$ be the polynomial ring over a field
$k$ obtained by associating a variable with each vertex of $G$.
We consider the \emph{edge ideal of $G$ in $R$}, defined as $\edgeideal(G)=\left(x_{i}x_{j}\,:\,\{x_{i},x_{j}\}\mbox{ an edge of }G\right)$. 

The Castelnuovo-Mumford regularity of an ideal $I$, denoted by $\reg I$,
is one of the main measures of the complexity of $I$. Several recent
papers \cite{Francisco/Ha/VanTuyl:2009,Ha/VanTuyl:2008,Moradi/Kiani:2010,Nevo:2011,Terai:2001,VanTuyl:2009,Whieldon:2010UNP}
have related the Castelnuovo-Mumford regularity of the edge ideal
$\edgeideal(G)$ with various invariants of the graph $G$. 

The purpose of this paper is to give a new upper bound on $\reg\left(R/\edgeideal(G)\right)$,
and to show that this new upper bound generalizes several other recently
discovered upper bounds. 

A graph $G$ is \emph{chordal} if every induced cycle in $G$ has
length 3, and is \emph{co-chordal} if the complement graph $\overline{G}$
is chordal. It follows from Fröberg's classification of edge ideals
with linear resolutions \cite{Froberg:1990} that $\reg\left(R/\edgeideal(G)\right)\leq1$
if and only if $G$ is co-chordal. (A direct proof using the techniques
in Section \ref{sec:Lower-bounds} is also straightforward). The \emph{co-chordal
cover number}, denoted\emph{ }$\cochordcov G$, is the minimum number
of co-chordal subgraphs required to cover the edges of $G$. 

Our main result is as follows:
\begin{thm}
\label{thm:MainLemma} For any graph $G$ and over any field $k$,
we have $\reg\left(R/\edgeideal(G)\right)\leq\cochordcov G$. 
\end{thm}
We will see the proof to follow almost immediately from a result of
Kalai and Meshulam \cite{Kalai/Meshulam:2006}. Nevertheless, Theorem
\ref{thm:MainLemma} provides a fundamental connection between combinatorics
and commutative algebra, and it will help us give simple and unified
proofs of both known and new upper bounds for the regularity of $R/\edgeideal(G)$. 

A particularly simple condition yielding a co-chordal cover (hence
a bound on regularity) is as follows:
\begin{thm}
\label{thm:SplitCovering}If $G$ is a graph such that $V(G)$ can
be partitioned into an (induced) independent set $J_{0}$ together
with $s$ cliques $J_{1},\dots,J_{s}$, then $\reg\left(R/\edgeideal(G)\right)\leq s$.
\end{thm}
The following is a recursive version of Theorem \ref{thm:SplitCovering}: 
\begin{thm}
\label{thm:SplitRecursiveFormulation}If $G$ is a graph such that
$J\subseteq V(G)$ induces a clique, then 
\[
\reg\left(R/\edgeideal(G)\right)\leq\reg\left(R/\edgeideal(G\setminus J)\right)+1,
\]
where $G\setminus J$ denotes the induced subgraph on $V(G)\setminus J$.
\end{thm}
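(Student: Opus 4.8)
The plan is to reduce the statement to the subadditivity of regularity under edge-unions, the Kalai--Meshulam ingredient \cite{Kalai/Meshulam:2006} that already underlies Lemma \ref{lem:MainLemma}. Write $W=V(G)\setminus J$, and split the edges of $G$ into two subgraphs on the common vertex set $V(G)$: let $G_{1}$ consist of all edges of $G$ meeting $J$ (those with at least one endpoint in $J$), and let $G_{2}$ consist of all edges of $G$ lying inside $W$. Since $J$ is a clique, every edge of $G$ either meets $J$ or lies inside $W$, so $E(G)=E(G_{1})\cup E(G_{2})$; thus $G=G_{1}\cup G_{2}$ as an edge-union.

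First I would identify $G_{1}$ as co-chordal. By construction $J$ induces a clique in $G_{1}$, while $W$ is an independent set of $G_{1}$ (all $W$--$W$ edges were pushed into $G_{2}$), so $G_{1}$ is a split graph. The complement of a split graph is again split, and split graphs are chordal, so split graphs are in particular co-chordal; hence by Fr\"oberg's classification \cite{Froberg:1990} (equivalently, the remark in the introduction) we get $\reg\left(R/\mathcal{I}(G_{1})\right)\leq 1$. If one prefers a direct check that $\overline{G_{1}}$ is chordal, note that an induced cycle of length $\geq 4$ in $\overline{G_{1}}$ could meet the clique $W$ of $\overline{G_{1}}$ in at most two vertices and could use no two adjacent vertices of the independent set $J$ of $\overline{G_{1}}$, which forces length $\leq 3$, a contradiction.

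Next I would dispose of $G_{2}$. Its edges are exactly the edges of the induced subgraph $G\setminus J$, with the vertices of $J$ now appearing as isolated vertices; since isolated vertices contribute only free polynomial variables, they leave the regularity unchanged, so $\reg\left(R/\mathcal{I}(G_{2})\right)=\reg\left(R/\mathcal{I}(G\setminus J)\right)$. Combining the two pieces through the subadditive bound $\reg\left(R/\mathcal{I}(G_{1}\cup G_{2})\right)\leq\reg\left(R/\mathcal{I}(G_{1})\right)+\reg\left(R/\mathcal{I}(G_{2})\right)$ then yields
\[ \reg\left(R/\mathcal{I}(G)\right)\leq 1+\reg\left(R/\mathcal{I}(G\setminus J)\right), \]
which is the claim. (Iterating this inequality by peeling off the cliques $J_{1},\dots,J_{s}$ one at a time recovers Theorem \ref{thm:SplitCovering}.)

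The routine parts are the edge bookkeeping and the remark about isolated vertices; the only genuine idea is that collapsing the $W$--$W$ edges turns the clique-incident edges into a split, hence co-chordal, graph. The real obstacle is that Lemma \ref{lem:MainLemma} as literally stated bounds $\reg$ only by a cover \emph{number}, whereas here I need the sharper additive form $\reg(G_{1}\cup G_{2})\leq\reg(G_{1})+\reg(G_{2})$. This comes from the Leray-number computation of Kalai and Meshulam: $\reg\left(R/\mathcal{I}(G)\right)$ equals the Leray number of the independence complex $\ind(G)$, and Leray numbers are subadditive under the intersection $\ind(G_{1})\cap\ind(G_{2})=\ind(G_{1}\cup G_{2})$. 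I would therefore make sure this subadditive version, and not merely Lemma \ref{lem:MainLemma}, is in hand before running the argument.
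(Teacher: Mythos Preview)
Your proposal is correct and is essentially the paper's own proof: define the split graph $H$ of all edges incident to $J$, note $E(G)=E(H)\cup E(G\setminus J)$, and apply the subadditive Kalai--Meshulam bound. Your closing worry is a non-issue---the subadditive form you need is exactly display~(\ref{eq:Kalai-Meshulam_EdgeIdeal}) in Section~\ref{sec:ProofMainLemma}, stated there as the edge-ideal specialization of Theorem~\ref{thm:KalaiMeshulam}, so it is already in hand and there is no obstacle.
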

\noindent In plain language, Theorem \ref{thm:SplitRecursiveFormulation}
says that deleting a clique lowers regularity by at most 1. The author
hopes that Theorems \ref{thm:SplitCovering} and \ref{thm:SplitRecursiveFormulation}
may be helpful to practitioners in the field for quickly finding rough
upper estimates of regularity of edge ideals.

\medskip{}

The remainder of this paper is organized as follows. In the remainder
of this section we review terminology from graph theory. In Section~\ref{sec:ProofMainLemma},
we prove Theorem \ref{thm:MainLemma}. In Section~\ref{sec:Lower-bounds},
we introduce the equivalent notion of regularity of a simplicial complex.
We then use topological techniques to calculate regularity of several
examples, and more generally to obtain lower bounds. In particularly
we give a geometric proof of the well-known fact (Lemma \ref{lem:InducedMatchingLB})
that $\reg\left(R/\edgeideal(G)\right)$ is at least the induced matching
number of $G$. In Section \ref{sec:Applications}, we combine Theorem
\ref{thm:MainLemma} with results from the graph theory literature
to prove Theorems \ref{thm:SplitCovering} and \ref{thm:SplitRecursiveFormulation}.
We recover and extend results of \cite{Ha/VanTuyl:2008} and \cite{Kummini:2009},
but show that results of \cite{Mahmoudi/Mousivand/Crupi/Rinaldo/Terai/Yassemi:2011}
and \cite{VanTuyl:2009} cannot be proved using this technique.

\subsection{\label{sub:Notation}Terminology and notation from graph theory}

All graphs discussed in this paper are simple, with no loops or multiedges.
We assume basic familiarity with standard graph theory definitions
as in e.g. \cite{Diestel:2005} or \cite{Mahadev/Peled:1995}, but
review some particular terms we will use:

If $\mathcal{F}$ is a family of graphs, then an \emph{$\mathcal{F}$
covering} of a graph $G$ is a collection $H_{1},\dots,H_{s}$ of
subgraphs of $G$ such that every $H_{i}$ is in $\mathcal{F}$, and
such that $\bigcup E(H_{i})=E(G)$. Elsewhere in the literature this
notion is sometimes referred to as an \emph{$\mathcal{F}$ edge covering},
to contrast with covers of the vertices. The \emph{$\mathcal{F}$
cover number} is the smallest size of an $\mathcal{F}$ cover. We
will mostly be interested in the case where $\mathcal{F}$ is some
subfamily of co-chordal graphs.

An \emph{independent set }in a graph $G$ is a subset of pairwise
non-adjacent vertices. Similarly a \emph{clique} is a subset of pairwise
adjacent vertices. We do not require cliques to be maximal. 

A \emph{matching} in a graph $G$ is a subgraph consisting of pairwise
disjoint edges. If the subgraph is an induced subgraph, the matching
is an \emph{induced matching}. The graph consisting of a matching
with $m$ edges we denote as $mK_{2}$. 

The \emph{independence number} $\alpha(G)$, \emph{clique number}
$\omega(G)$, and \emph{induced matching number $\im G$} are respectively
the maximum size of an independent set, clique, or induced matching. 

A \emph{coloring} of $G$ is a partition of the vertices into (induced)
independent sets (\emph{colors}), and the \emph{chromatic number}
$\chi(G)$ is the smallest number of colors possible in a coloring
of $G$. A graph $G$ is \emph{perfect} if $\alpha(H)=\chi(\overline{H})$
for every induced subgraph $H$ of $G$. It is well-known that the
complement of a perfect graph is also perfect.

We denote by $P_{n}$ the path on $n$ vertices (having edges $\{x_{1}x_{2},x_{2}x_{3},\dots,x_{n-1},x_{n}\}$),
and by $C_{n}$ the cycle on $n$ vertices (having the edges of $P_{n}$
together with $x_{1}x_{n}$).

\section{\label{sec:ProofMainLemma}Proof of Theorem \ref{thm:MainLemma}}

As previously mentioned, Theorem \ref{thm:MainLemma} is an easy consequence
of the following deep result of Kalai and Meshulam \cite{Kalai/Meshulam:2006}.
\begin{thm}
\emph{\label{thm:KalaiMeshulam}(Kalai and Meshulam \cite[Theorem 1.2]{Kalai/Meshulam:2006})
If $I_{1},\dots,I_{s}$ are square-free monomial ideals of a polynomial
ring $R=k[x_{1},\dots,x_{n}]$ (for some field $k$), then 
\[
\reg\left(R\,\big/\left(I_{1}+\dots+I_{s}\right)\right)\leq\sum_{j=1}^{s}\reg\left(R/I_{j}.\right)
\]
}\end{thm}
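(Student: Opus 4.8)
\emph{The plan} is to recast the statement topologically through Stanley--Reisner theory and then establish the resulting inequality for a purely combinatorial invariant. Since each $\mathcal{I}_j$ is a square-free monomial ideal, I would write $\mathcal{I}_j=\mathcal{I}(\Delta_j)$ for the Stanley--Reisner ideal of a simplicial complex $\Delta_j$ on $\{x_1,\dots,x_n\}$. A subset is a non-face of $\Delta_1\cap\dots\cap\Delta_s$ exactly when it is a non-face of some $\Delta_j$, so $\mathcal{I}_1+\dots+\mathcal{I}_s=\mathcal{I}(\Delta_1\cap\dots\cap\Delta_s)$. By Hochster's formula, $\reg(R/\mathcal{I}(\Delta))$ equals the \emph{Leray number} $L_k(\Delta)$, the least $d$ with $\tilde{H}_i(\Delta[S];k)=0$ for every induced subcomplex $\Delta[S]$ ($S\subseteq\{x_1,\dots,x_n\}$) and every $i\ge d$. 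The theorem thus becomes the subadditivity of the Leray number under intersection,
\[
L_k(\Delta_1\cap\dots\cap\Delta_s)\;\le\;\sum_{j=1}^{s}L_k(\Delta_j),
\]
and the square-free hypothesis has been spent precisely to make this passage to simplicial complexes possible. Granting the two-term case $L_k(\Gamma_1\cap\Gamma_2)\le L_k(\Gamma_1)+L_k(\Gamma_2)$, an induction on $s$ applied to $\Gamma_1=\Delta_1$ and $\Gamma_2=\Delta_2\cap\dots\cap\Delta_s$ finishes the job, so everything rests on two complexes.

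For the two-term case I would run a second induction, on the number of vertices. Because restriction commutes with intersection, $(\Delta_1\cap\Delta_2)[S]=\Delta_1[S]\cap\Delta_2[S]$ with each $\Delta_i[S]$ still $d_i$-Leray on fewer vertices, the vertex-count induction reduces the Leray bound for $Z:=\Delta_1\cap\Delta_2$ to the single claim that $\tilde{H}_i(Z;k)=0$ for $i\ge d_1+d_2$, where $d_i=L_k(\Delta_i)$. Fixing a vertex $v$ and applying Mayer--Vietoris to $Z=(Z\setminus v)\cup\st_Z(v)$ (contractible closed star, with $(Z\setminus v)\cap\st_Z(v)=\link_Z v$) yields the exact strand
\[
\tilde{H}_i(Z\setminus v)\to\tilde{H}_i(Z)\to\tilde{H}_{i-1}(\link_Z v)\to\tilde{H}_{i-1}(Z\setminus v).
\]
Here $Z\setminus v=(\Delta_1\setminus v)\cap(\Delta_2\setminus v)$ and $\link_Z v=\link_{\Delta_1}v\cap\link_{\Delta_2}v$ both live on fewer vertices; the deletion is handled by the vertex-count induction, and since a link of a $d$-Leray complex is again $d$-Leray (the relevant strand shows its homology vanishes in degrees $\ge d$), the induction also controls $\link_Z v$, giving it Leray number at most $d_1+d_2$.

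\emph{The hard part} is the top degree $i=d_1+d_2$. For $i\ge d_1+d_2+1$ both outer terms of the displayed strand vanish by induction and force $\tilde{H}_i(Z)=0$; but at $i=d_1+d_2$ the surviving obstruction is $\tilde{H}_{d_1+d_2-1}(\link_Z v)$, exactly one degree below what the induction reaches. The tempting fix would be that a link of a $d_i$-Leray complex is $(d_i-1)$-Leray, but this is \emph{false}: the hollow triangle $C_3$ has Leray number $2$, yet it is the vertex link in the contractible cone $x_0*C_3$, which is itself only $2$-Leray. Defeating this off-by-one at the boundary degree---by a finer simultaneous analysis of the connecting homomorphisms in the link and deletion sequences---is the genuine content of Kalai and Meshulam's argument, and I expect it to be the main obstacle. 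An equivalent packaging one might find more tractable uses Terai's duality to trade the statement for a bound on the projective dimension of a union of Alexander duals, whose Stanley--Reisner ideal is $\mathcal{I}(\Delta_1^\vee)\cap\dots\cap\mathcal{I}(\Delta_s^\vee)$, via Auslander--Buchsbaum; there the same boundary difficulty resurfaces as the need to control the last nonvanishing $\operatorname{Tor}$ module.
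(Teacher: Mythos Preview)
The paper does not prove this theorem; it is quoted as a black box from Kalai and Meshulam \cite[Theorem 1.2]{Kalai/Meshulam:2006}, with a remark that Herzog later extended it to non-square-free monomial ideals. There is therefore no proof in the paper to compare your proposal against.

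As for the proposal on its own terms: the translation to Leray numbers via Hochster's formula, the identification $\mathcal{I}_1+\cdots+\mathcal{I}_s=\mathcal{I}(\Delta_1\cap\cdots\cap\Delta_s)$, and the reduction to $s=2$ are all correct. Your Mayer--Vietoris induction on the vertex count is a natural first attempt, and you have correctly located where it stalls: at the boundary degree $i=d_1+d_2$ the term $\tilde{H}_{d_1+d_2-1}(\link_Z v)$ survives, and your cone-over-$C_3$ example rightly rules out the tempting patch that links drop the Leray number by one. But at that point you stop and declare the remaining step to be ``the genuine content of Kalai and Meshulam's argument.'' That is an honest assessment, yet it means what you have written is not a proof; it is a reduction to the crux together with an admission that you have not carried it out. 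The actual Kalai--Meshulam argument does not push through a bare vertex-deletion Mayer--Vietoris; it passes through an auxiliary projection/nerve-type bound for Leray numbers under simplicial maps, applied to a suitable join, which sidesteps precisely the off-by-one you isolated. Without some device of that kind, the induction as you have set it up does not close.
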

\begin{rem}
Theorem \ref{thm:KalaiMeshulam} was conjectured by Terai \cite{Terai:2001}.
Herzog \cite{Herzog:2007} later generalized the result to monomial
ideals that are not square-free.
\end{rem}

\begin{rem}
Kalai and Meshulam stated \emph{\cite[Theorem 1.2]{Kalai/Meshulam:2006}}
in terms of $\reg(I_{j})$'s, rather than $\reg(R/I_{j})$'s. Theorem
\ref{thm:KalaiMeshulam} is equivalent, since by e.g. \cite[Theorem 1.34]{Miller/Sturmfels:2005},
we have $\reg I=\reg(R/I)+1$.
\end{rem}
In the context of edge ideals, Theorem \ref{thm:KalaiMeshulam} says
that if $G_{1},\dots,G_{s}$ are graphs on the same vertex set $\{x_{1},\dots,x_{n}\}$,
then 
\begin{equation}
\reg\left(R/\edgeideal(\bigcup_{j=1}^{s}G_{j})\right)\leq\sum_{j=1}^{s}\reg\left(R/\edgeideal(G_{j})\right).\label{eq:Kalai-Meshulam_EdgeIdeal}
\end{equation}

\begin{proof}[Proof of Theorem \ref{thm:MainLemma}.]
 Recall from above that $\reg\left(R/\edgeideal(H)\right)=1$ if
and only if $H$ is co-chordal with at least one edge. The result
then follows immediately from (\ref{eq:Kalai-Meshulam_EdgeIdeal})
by considering the case where each $R/\edgeideal(G_{j})$ has regularity
1.
\end{proof}
We comment that (\ref{eq:Kalai-Meshulam_EdgeIdeal}) can more generally
be applied to edge ideals of clutters (i.e., to square-free monomial
ideals with degree $>2$), but that in this case the set of ideals
with linear resolution (that is, smallest possible regularity) is
not classified, giving more fragmented results. In this paper we henceforth
restrict ourselves to the case of graphs.

\section{\label{sec:Lower-bounds}Lower bounds and simple examples}

Before discussing applications, it will be convenient to have lower
bounds to compare with the upper bound of Theorem \ref{thm:MainLemma}.
As we will shortly see that $\reg\left(R/\edgeideal(H)\right)\leq\reg\left(R/\edgeideal(G)\right)$
for every induced subgraph $H$ of $G$, lower bounds usually come
from examples. 

We will compute regularity through Hochster's Formula (see e.g. \cite{Miller/Sturmfels:2005}),
which relates local cohomology of the quotient $R/I$ of a square-free
monomial ideal with the simplicial cohomology of the simplicial complex
of non-zero square-free monomials in $R/I$. We refer to \cite{Hatcher:2002}
for basic background on simplicial cohomology, or to \cite{Bjorner:1995}\textbf{
}for a concise reference aimed at combinatorics. 

The \emph{Castelnuovo-Mumford regularity }of a simplicial complex
$\Delta$\emph{ }over a field $k$, denoted $\reg_{k}\Delta$, is
defined to be the maximum $i$ such that the reduced homology $\tilde{H}_{i-1}(\Gamma;k)\neq0$
for some induced subcomplex $\Gamma$ of $\Delta$. It is well-known
to follow from Hochster's Formula (together with the Betti number
characterization of regularity) that $\reg_{k}\Delta$ is equal to
the Castelnuovo-Mumford regularity of the Stanley-Reisner ring of
$\Delta$ over $k$. We remark that complexes with regularity at most
$d$ have been referred to as \emph{$d$-Leray}, and have been studied
in the context of proving certain Helly-type theorems \cite{Kalai/Meshulam:2006}. 

In the case of the edge ideal of a graph $G$, let $\ind G$ denote
the \emph{independence complex of $G$}, consisting of all independent
sets of $G$. In this case our above discussion specializes to the
relation: 
\begin{equation}
\reg\left(k[x_{1},\dots,x_{n}]\big/\edgeideal(G)\right)=\reg_{k}\left(\ind G\right).\label{eq:TopReg}
\end{equation}
(Note that we write $k[x_{1},\dots,x_{n}]$ rather than $R$ to emphasize
the field over which we are working.) 

In particular, it follows immediately from definition of $\reg_{k}\Delta$
that $\reg_{k}\left(\ind H\right)\leq\reg_{k}\left(\ind G\right)$
for $H$ an induced subgraph of $G$. Thus, for example, finding an
induced subgraph of $G$ whose independence complex is a $d$-dimensional
sphere would show that $\reg\left(R/\edgeideal(G)\right)=\reg_{k}\left(\ind G\right)\geq d+1$. 

Such bounds often do not depend on the choice of field $k$ that we
work over, and in such cases we will suppress $k$ from our notation.\medskip{}

Recall that an \emph{induced matching} in a graph $G$ is a matching
which forms an induced subgraph of $G$, and that $\im G$ denotes
the number of edges in a largest induced matching. Induced matchings
have a considerable literature, see e.g. \cite{Abueida/Busch/Sritharan:2010,Cameron:1989,Cameron:2004,Faudree/Gyarfas/Schelp/Tuza:1989,Golumbic/Lewenstein:2000}. 

The following is essentially due to Katzman; we will give a short
geometric proof.
\begin{lem}
\label{lem:InducedMatchingLB}\emph{(Katzman \cite[Lemma 2.2]{Katzman:2006})}
For any graph $G$, we have $\reg\left(R/\edgeideal(G)\right)\geq\im G$.\end{lem}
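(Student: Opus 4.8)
The goal is to show $\reg\left(R/\mathcal{I}(G)\right)\geq\im G$ for every graph $G$. Set $m=\im G$ and fix an induced matching $M$ consisting of $m$ pairwise disjoint edges $\{a_1b_1,\dots,a_mb_m\}$ whose $2m$ endpoints induce exactly the matching $mK_2$ in $G$. By the monotonicity observation stated just above the lemma---namely that $\reg_{k}\left(\ind H\right)\leq\reg_{k}\left(\ind G\right)$ for any induced subgraph $H$---it suffices to understand the induced subgraph $H$ on the vertex set of $M$, which is precisely $mK_2$. Thus the whole problem reduces to computing (or lower-bounding) $\reg_{k}\left(\ind mK_2\right)$ and showing it is at least $m$.

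\textbf{Key steps.}
First I would reduce to the disjoint matching $mK_2$ via induced-subgraph monotonicity, as above. Second, I would identify the independence complex $\ind\left(mK_2\right)$ explicitly. An independent set in $mK_2$ may contain at most one of $\{a_i,b_i\}$ from each edge, and there is no other constraint across distinct edges; hence $\ind\left(mK_2\right)$ is the join (in the simplicial-complex sense) of $m$ copies of the complex on two points $\{a_i,b_i\}$, each of which is the boundary $S^0$ consisting of two disjoint vertices. Since a join of spheres is a sphere, $\ind\left(mK_2\right)\cong S^0\freejoin\cdots\freejoin S^0$ ($m$ factors) is homeomorphic to a sphere $S^{m-1}$. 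Third, I would invoke the definition of $\reg_{k}\Delta$ together with the observation, made explicitly in the paragraph preceding the lemma in the excerpt, that an induced subgraph whose independence complex is a $d$-dimensional sphere forces regularity at least $d+1$. Taking $d=m-1$ gives $\reg\left(R/\mathcal{I}(G)\right)=\reg_{k}\left(\ind G\right)\geq m=\im G$, as desired. (The sphere here has nonzero reduced homology in dimension $m-1$ over every field $k$, so the bound is field-independent and $k$ can be suppressed.)

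\textbf{The main obstacle.}
The genuinely technical point is the identification of $\ind\left(mK_2\right)$ as an $(m-1)$-sphere and the verification that its top reduced homology is nonzero. The cleanest route is the join decomposition: establishing that the independence complex of a disjoint union of graphs is the simplicial join of the individual independence complexes, and then applying the standard fact that $\tilde{H}_*$ of a join is the (shifted, reduced) tensor product of the factors' homologies---equivalently, that the join of $m$ copies of $S^0$ is $S^{m-1}$. Each factor $S^0$ has $\tilde{H}_0\neq 0$, so the Künneth-type formula for joins yields $\tilde{H}_{m-1}\left(\ind mK_2;k\right)\cong k\neq 0$ over any field. Everything else is bookkeeping, so the burden of the proof rests on making this join/homology computation precise.
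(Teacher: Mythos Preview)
Your proposal is correct and matches the paper's own proof essentially line for line: reduce to the induced subgraph $mK_{2}$, observe that $\ind$ takes disjoint unions to simplicial joins, identify $\ind(mK_{2})$ as the $m$-fold join of $0$-spheres (the boundary of the cross-polytope, an $(m-1)$-sphere), and conclude via the sphere-forces-regularity observation. The only cosmetic difference is that you spell out the K\"unneth-type justification for the top homology, whereas the paper simply cites the sphere identification and moves on.
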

\begin{proof}
\noindent Let $m=\im G$, so that $G$ has $mK_{2}$ as an induced
subgraph. Notice that if $H$ is the disjoint union of subgraphs $H_{1}$
and $H_{2}$, then $\ind(H)$ is the simplicial join $\ind(H_{1})*\ind(H_{2})$.
Thus, the independence complex of the disjoint union of $m$ edges
is the $m$-fold join of $0$-spheres, hence an $(m-1)$-sphere. (It
is the boundary complex of an $(m-1$)-dimensional cross-polytope.)
The result follows.
\end{proof}
A more general result follows immediately from the Künneth formula
in algebraic topology \cite[(9.12)]{Bjorner:1995}:
\begin{lem}
\label{lem:RegularityOfJoin}For any field $k$ and simplicial complexes
$\Delta_{1}$ and $\Delta_{2}$, we have 
\[
\reg_{k}\left(\Delta_{1}*\Delta_{2}\right)=\reg_{k}\Delta_{1}+\reg_{k}\Delta_{2}.
\]

\end{lem}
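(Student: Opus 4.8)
The plan is to reduce the statement to the reduced Künneth formula for joins together with a single combinatorial observation. First I would record that observation: since the vertex sets of $\Delta_{1}$ and $\Delta_{2}$ are disjoint inside $\Delta_{1}*\Delta_{2}$, every induced subcomplex $\Gamma$ of $\Delta_{1}*\Delta_{2}$ splits as $\Gamma=\Gamma_{1}*\Gamma_{2}$, where $\Gamma_{i}$ is the induced subcomplex of $\Delta_{i}$ on the vertices of $\Gamma$ lying in $\Delta_{i}$; conversely, any join $\Gamma_{1}*\Gamma_{2}$ of induced subcomplexes is an induced subcomplex of $\Delta_{1}*\Delta_{2}$. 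Thus the supremum defining $\reg_{k}(\Delta_{1}*\Delta_{2})$ ranges precisely over joins of induced subcomplexes of the two factors.

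Next I would invoke the Künneth formula \cite[(9.12)]{Bjorner:1995}, which over the field $k$ gives, for any complexes $\Gamma_{1},\Gamma_{2}$,
\[
\tilde{H}_{p-1}(\Gamma_{1}*\Gamma_{2};k)\cong\bigoplus_{a+b=p}\tilde{H}_{a-1}(\Gamma_{1};k)\otimes_{k}\tilde{H}_{b-1}(\Gamma_{2};k).
\]
The shifted indexing matching the $\tilde{H}_{i-1}$ convention in the definition of $\reg_{k}$ is exactly what makes the homological degree additive, and it accommodates automatically the contribution of the empty face, where $\tilde{H}_{-1}(\{\emptyset\};k)=k$. Because $k$ is a field there are no Tor terms, and a tensor product of two $k$-vector spaces is nonzero if and only if both factors are nonzero.

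With these in hand the two inequalities are immediate. For the lower bound, set $r_{i}=\reg_{k}\Delta_{i}$ and choose induced subcomplexes $\Gamma_{i}\subseteq\Delta_{i}$ with $\tilde{H}_{r_{i}-1}(\Gamma_{i};k)\neq0$; the $a=r_{1}$, $b=r_{2}$ summand above is then nonzero, so $\tilde{H}_{(r_{1}+r_{2})-1}(\Gamma_{1}*\Gamma_{2};k)\neq0$ and $\reg_{k}(\Delta_{1}*\Delta_{2})\geq r_{1}+r_{2}$. For the upper bound, take any induced subcomplex $\Gamma=\Gamma_{1}*\Gamma_{2}$ with $\tilde{H}_{p-1}(\Gamma;k)\neq0$; by the formula some summand with $a+b=p$ is nonzero, forcing both $\tilde{H}_{a-1}(\Gamma_{1};k)\neq0$ and $\tilde{H}_{b-1}(\Gamma_{2};k)\neq0$, whence $a\leq r_{1}$, $b\leq r_{2}$, and $p\leq r_{1}+r_{2}$. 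Combining the inequalities gives the claimed equality.

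The argument is short, so the only real care needed is bookkeeping. I must ensure the degree conventions in the Künneth formula match the $\tilde{H}_{i-1}$ convention, and that the degenerate cases—the void complex, the complex $\{\emptyset\}$ consisting only of the empty face, and subcomplexes on the empty vertex set—are handled consistently so that the empty-face homology $\tilde{H}_{-1}$ is tracked correctly (this is precisely what guarantees $\reg_{k}\Delta\geq0$ in general). I expect verifying this index matching to be the main, though minor, obstacle; once the shifted indexing is pinned down, everything else is formal.
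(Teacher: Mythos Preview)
Your proposal is correct and follows essentially the same approach as the paper, which simply states that the lemma ``follows immediately from the K\"unneth formula in algebraic topology \cite[(9.12)]{Bjorner:1995}'' without further detail. You have filled in the details the paper omits---most importantly the observation that induced subcomplexes of $\Delta_{1}*\Delta_{2}$ are precisely joins of induced subcomplexes of the factors---but the underlying idea is the same.
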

\noindent In the context of edge ideal quotients, if $G_{1}$ and
$G_{2}$ are any two graphs then over any field $k$, then for their
disjoint union $G_{1}\amalg G_{2}$ we have 
\begin{equation}
\reg\left(R/\edgeideal(G_{1}\amalg G_{2})\right)=\reg\left(R/\edgeideal(G_{1})\right)+\reg\left(R/\edgeideal(G_{2})\right).\label{eq:DisjointUnionReg}
\end{equation}
Thus, Lemma \ref{lem:InducedMatchingLB} is the special case where
we take the disjoint union of graphs with a single edge.\medskip{}

\noindent Lemmas \ref{thm:MainLemma} and \ref{lem:InducedMatchingLB}
admit the simple combined statement that for any graph $G$ we have
\begin{equation}
\im G\leq\reg\left(R/\edgeideal\left(G\right)\right)\leq\cochordcov G.\label{eq:bounds}
\end{equation}
Both inequalities can both be strict, as the interested reader can
quickly see by examination of $C_{5}$ and $C_{7}$. Indeed, it follows
easily that regularity can be arbitrarily far from both $\im G$ and
$\cochordcov G$:
\begin{prop}
\label{prop:UB-LB_gap} For any nonnegative integers $r,s$ there
is a graph $G$ such that 
\[
\im G=\reg\left(R/\edgeideal(G)\right)-r\quad\mbox{and}\quad\cochordcov G=\reg\left(R/\edgeideal(G)\right)+s.
\]
\end{prop}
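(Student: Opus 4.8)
The plan is to build $G$ as a disjoint union of the two cycles singled out after (\ref{eq:bounds}): take $r$ copies of $C_{5}$ and $s$ copies of $C_{7}$, that is, $G = rC_{5}\disjointunion sC_{7}$ (and $G$ the edgeless graph when $r=s=0$). All three quantities in (\ref{eq:bounds}) are additive over disjoint unions: for $\reg$ this is exactly (\ref{eq:DisjointUnionReg}), for $\im$ it is immediate since an induced matching of $G_{1}\disjointunion G_{2}$ is the same data as a pair of induced matchings, one in each component, and for $\cochordcov$ I will verify it below. Granting this, the proposition reduces to the single-cycle computations $\im C_{5}=1$, $\reg(R/\mathcal{I}(C_{5}))=2$, $\cochordcov C_{5}=2$ and $\im C_{7}=2$, $\reg(R/\mathcal{I}(C_{7}))=2$, $\cochordcov C_{7}=3$: summing gives $\reg(R/\mathcal{I}(G))=2r+2s$, $\im G=r+2s=\reg(R/\mathcal{I}(G))-r$, and $\cochordcov G=2r+3s=\reg(R/\mathcal{I}(G))+s$, as required.

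The one nonobvious additivity statement, and the engine behind the $\cochordcov$ values, is the observation that a co-chordal graph contains no induced $2K_{2}$: since $\overline{2K_{2}}=C_{4}$, an induced $2K_{2}$ in $H$ would produce an induced $C_{4}$ in the chordal graph $\overline{H}$. Consequently, if $e,f$ are edges of $G$ whose four endpoints induce a $2K_{2}$, then no single co-chordal subgraph in any cover of $G$ can contain both $e$ and $f$, because passing to a subgraph only deletes edges and so the induced $2K_{2}$ persists. For a disjoint union this forces every co-chordal subgraph of a cover to have all of its edges inside one component, whence $\cochordcov(G_{1}\disjointunion G_{2})\ge\cochordcov G_{1}+\cochordcov G_{2}$; the reverse inequality is the trivial union of separate covers, so equality holds.

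For the cycle computations, $\im C_{5}=1$ and $\im C_{7}=2$ are routine, and $\cochordcov C_{5}=2$ holds because $\overline{C_{5}}=C_{5}$ is not chordal (so $C_{5}$ is not co-chordal) while the two paths $x_{1}x_{2}x_{3}x_{4}$ and $x_{4}x_{5}x_{1}$ cover $C_{5}$ by two co-chordal graphs. The regularities $\reg(R/\mathcal{I}(C_{5}))=\reg(R/\mathcal{I}(C_{7}))=2$ I would obtain with the machinery of Section~\ref{sec:Lower-bounds}: the lower bound $\ge2$ from the fact that $\ind C_{5}$ and $\ind C_{7}$ are each homotopy equivalent to $S^{1}$ (the method of Lemma~\ref{lem:InducedMatchingLB}, or a direct homology computation), and the upper bound from Lemma~\ref{lem:MainLemma} together with $\cochordcov C_{5}=2$ in the first case, and from the known regularity of cycles (or a short check that no induced subgraph of $C_{7}$ carries two-dimensional homology) in the second.

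The crux is the lower bound $\cochordcov C_{7}\ge3$, which is exactly what makes the upper bound in (\ref{eq:bounds}) strict here. Label the edges $e_{1},\dots,e_{7}$ cyclically; two of them induce a $2K_{2}$ in $C_{7}$ precisely when they lie at cyclic distance $3$. By the previous paragraph, any co-chordal cover must place such a conflicting pair in different subgraphs, so it induces a proper coloring of the ``conflict graph'' on $\{e_{1},\dots,e_{7}\}$; since the distance-$3$ circulant on seven vertices is itself a $7$-cycle, this conflict graph has chromatic number $3$, forcing $\cochordcov C_{7}\ge3$. The explicit cover by the paths $x_{1}x_{2}x_{3}x_{4}$ and $x_{4}x_{5}x_{6}x_{7}$ together with the single edge $x_{7}x_{1}$ gives $\cochordcov C_{7}=3$. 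I expect this chromatic lower bound to be the main obstacle; once it is in hand, the additive bookkeeping of the first paragraph completes the proof, with the degenerate cases $r=0$ or $s=0$ covered by the same formulas.
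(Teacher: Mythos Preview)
Your construction is exactly the one the paper uses: the paper's entire proof is the single sentence ``Consider $r$ copies of $C_{5}$ disjoint union with $s$ copies of $C_{7}$,'' relying on the remark just before the proposition that the reader can verify the strictness of the two inequalities in (\ref{eq:bounds}) at $C_{5}$ and $C_{7}$. You have correctly supplied the details the paper leaves implicit, including the additivity of $\cochordcov$ over disjoint unions (via the no-induced-$2K_{2}$ property of co-chordal graphs) and the chromatic lower bound $\cochordcov C_{7}\ge 3$; these are sound, and indeed the paper later records the same cycle values in and after Proposition~\ref{pro:PathsAndCycles}.
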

\begin{proof}
Consider $r$ copies of $C_{5}$ disjoint union with $s$ copies of
$C_{7}$.
\end{proof}
\noindent Another relevant construction can be found in Lemma \ref{lem:WhiskeredGraph}
and the discussion following.

\medskip{}

More generally, Kozlov calculated the homotopy type of the independence
complexes of paths and cycles \cite[Propositions 4.6 and 5.2]{Kozlov:1999},
from which the following is immediate:
\begin{prop}
\label{pro:PathsAndCycles}$\reg\left(R/\edgeideal(C_{n})\right)=\reg\left(R/\edgeideal(P_{n})\right)=\left\lfloor \frac{n+1}{3}\right\rfloor $
for $n\geq3$.
\end{prop}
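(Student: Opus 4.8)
The plan is to work entirely on the topological side, via the identity $\reg\left(R/\mathcal{I}(G)\right)=\reg_{k}\left(\ind G\right)$ and the fact that $\reg_{k}\left(\ind G\right)$ is the largest $j$ for which \emph{some} induced subgraph $H$ of $G$ satisfies $\tilde{H}_{j-1}(\ind H;k)\neq0$. The structural observation that drives everything is that every induced subgraph of $P_{n}$ or $C_{n}$ is a disjoint union of paths: deleting vertices from a path or cycle simply cuts it into arcs. Hence by the disjoint-union formula (\ref{eq:DisjointUnionReg}) the regularity of any such $H$ is the sum of the regularities of its path components, and the entire problem reduces to understanding $\reg\left(R/\mathcal{I}(P_{m})\right)$ for $m\leq n$ together with Kozlov's homotopy types. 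I would record the latter in the uniform form that $\ind(P_{m})$ is contractible when $m\equiv1\pmod 3$ and is homotopy equivalent to $S^{\lfloor(m+1)/3\rfloor-1}$ otherwise, while $\ind(C_{n})$ is a wedge of one or two copies of $S^{\lfloor(n+1)/3\rfloor-1}$.

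For the lower bounds I would treat the two families separately. For paths I would exhibit the induced matching on the edges $x_{1}x_{2},x_{4}x_{5},x_{7}x_{8},\dots$, which has exactly $\lfloor(n+1)/3\rfloor$ edges, so that Lemma \ref{lem:InducedMatchingLB} immediately yields $\reg\left(R/\mathcal{I}(P_{n})\right)\geq\lfloor(n+1)/3\rfloor$. For cycles the induced matching number is only $\lfloor n/3\rfloor$, so I would instead read the bound straight off Kozlov's computation: since $\ind(C_{n})$ has nonzero reduced homology in degree $\lfloor(n+1)/3\rfloor-1$, taking $H=C_{n}$ in the definition of $\reg_{k}$ gives $\reg\left(R/\mathcal{I}(C_{n})\right)\geq\lfloor(n+1)/3\rfloor$.

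For the matching upper bounds I would induct on $n$. The full complex $\ind(P_{n})$ (resp.\ $\ind(C_{n})$) contributes at most $\lfloor(n+1)/3\rfloor$, since by Kozlov it has no reduced homology above degree $\lfloor(n+1)/3\rfloor-1$. Every \emph{proper} induced subgraph is a disjoint union $P_{n_{1}}\disjointunion\dots\disjointunion P_{n_{t}}$ with each $n_{i}<n$, so by (\ref{eq:DisjointUnionReg}) and the inductive hypothesis its regularity equals $\sum_{i}\lfloor(n_{i}+1)/3\rfloor$, and it remains to verify the combinatorial inequality $\sum_{i}\lfloor(n_{i}+1)/3\rfloor\leq\lfloor(n+1)/3\rfloor$. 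This I would get by bounding each floor by $(n_{i}+1)/3$ and using that the $t$ arcs are separated by at least $t-1$ deleted vertices in the path case (and at least $t$ in the cyclic case), so that $\sum_{i}n_{i}\leq n-(t-1)$; the resulting rational bound is at most $(n+1)/3$, and integrality of the left side promotes this to the floor. The one genuinely nonobvious point to watch — and the main obstacle — is that for $n\equiv1\pmod 3$ the complex $\ind(P_{n})$ is contractible, so the regularity of $P_{n}$ is \emph{not} visible from the homotopy type of the whole complex but only from a proper induced subcomplex. This is precisely why the lower bound for paths must be sourced from an induced matching (equivalently, from the induced subpath $P_{n-1}$) rather than from $\ind(P_{n})$ itself, and it is the step at which a careless ``immediate from Kozlov'' reading would fail.
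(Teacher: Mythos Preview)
Your proposal is correct and follows the same route the paper takes: the paper simply states that the result is ``immediate'' from Kozlov's computation of the homotopy types of $\ind(P_{n})$ and $\ind(C_{n})$, and you have unpacked exactly what that entails---the passage from homotopy type to regularity via the induced-subcomplex definition, the reduction of all induced subgraphs to disjoint unions of shorter paths, and the floor inequality. In particular, your observation that the contractible case $\ind(P_{3k+1})$ forces the lower bound to come from a proper induced subgraph is precisely the point the paper's word ``immediate'' glosses over, and you handle it correctly.
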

\noindent (Regularity of $R/\edgeideal(P_{n})$ was also calculated
in \cite{Bouchat:2010} using purely algebraic methods.)

It is easy to see that the regularity is equal to the lower bound
of Lemma \ref{lem:InducedMatchingLB} in the $P_{n}$ case, and in
the $C_{n}$ case when $n\not\equiv2$ (mod 3); but that $\reg(\ind(C_{3i+2}))=i+1=\im(C_{3i+2})+1$. 

Since the graph formed by two disjoint edges is not co-chordal, we
see that co-chordal subgraphs of $P_{n}$ and $C_{n}$ (for $n\geq5$)
are paths with at most 3 edges. Thus, regularity is equal to the upper
bound of Theorem \ref{thm:MainLemma} in the $P_{n}$ case, and in
the $C_{n}$ case when $n\not\equiv1$ (mod 3); but for $i>1$ we
have $\reg(\ind\left(C_{3i+1}\right))=i=\cochordcov\left(C_{3i+1}\right)-1$.\medskip{}

By combining Proposition \ref{pro:PathsAndCycles} with Lemma \ref{lem:RegularityOfJoin},
we can somewhat improve the induced matching lower bound of Lemma
\ref{lem:InducedMatchingLB}:
\begin{cor}
\label{cor:IndmatchAndCycleLB}If a graph $G$ has an induced subgraph
$H$ which is the disjoint union of edges and cycles 
\[
H\cong mK_{2}\,\amalg\,\coprod_{j=1}^{n}C_{3i_{j}+2}
\]
then $\reg\left(R/\edgeideal(G)\right)\geq m+n+\sum_{j=1}^{n}i_{j}$.
\end{cor}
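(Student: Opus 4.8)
The plan is to reduce the statement to a computation on $H$ alone, and then evaluate the regularity of $H$ component by component. Since $H$ is an induced subgraph of $G$, the complex $\ind H$ is an induced subcomplex of $\ind G$, so the monotonicity $\reg(R/\mathcal{I}(H)) \leq \reg(R/\mathcal{I}(G))$ recorded earlier in this section reduces the claim to showing $\reg(R/\mathcal{I}(H)) = m + n + \sum_{j=1}^{n} i_{j}$ (the corollary then being the resulting inequality, with equality in the case $G = H$).

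First I would exploit the disjoint-union structure of $H$. Applying the join formula (\ref{eq:DisjointUnionReg}) (equivalently Lemma \ref{lem:RegularityOfJoin}) iteratively across the $m$ single edges and the $n$ cycles gives
\[
\reg(R/\mathcal{I}(H)) = m\,\reg(R/\mathcal{I}(K_{2})) + \sum_{j=1}^{n} \reg(R/\mathcal{I}(C_{3i_{j}+2})).
\]
The edge term is $\reg(R/\mathcal{I}(K_{2})) = 1$, since the independence complex of a single edge is a $0$-sphere (or by (\ref{eq:bounds}), as $\im(K_{2}) = \cochordcov(K_{2}) = 1$). Each cyclic term I would evaluate using Proposition \ref{pro:PathsAndCycles}, which gives $\reg(R/\mathcal{I}(C_{3i_{j}+2})) = \lfloor (3i_{j}+3)/3 \rfloor = i_{j} + 1$. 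Substituting and summing yields $m + \sum_{j=1}^{n}(i_{j}+1) = m + n + \sum_{j=1}^{n} i_{j}$, as required.

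I do not anticipate a serious obstacle: the result is a direct assembly of induced-subgraph monotonicity, the disjoint-union formula, and the already-established cycle regularities, with the $C_{3i+2}$ family chosen precisely because its regularity exceeds its induced matching number (as noted immediately after Proposition \ref{pro:PathsAndCycles}), which is exactly what makes this an \emph{improvement} over Lemma \ref{lem:InducedMatchingLB}. The only point demanding care is bookkeeping---carrying out the floor evaluation $\lfloor (3i+3)/3 \rfloor = i+1$ correctly, and remembering to include the $m$ isolated-edge contributions alongside the cyclic ones when composing the join formula over all components.
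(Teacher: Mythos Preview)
Your argument is correct and follows exactly the route the paper indicates: induced-subgraph monotonicity reduces to $H$, then Lemma~\ref{lem:RegularityOfJoin} (equivalently~(\ref{eq:DisjointUnionReg})) splits the regularity over components, and Proposition~\ref{pro:PathsAndCycles} supplies $\reg(R/\mathcal{I}(C_{3i_j+2}))=i_j+1$. The bookkeeping is clean and nothing is missing.
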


\section{\label{sec:Applications}Applications}

We can recover, and in some cases improve, several of the upper bounds
for regularity in the combinatorial commutative algebra literature
by combining Theorem~\ref{thm:MainLemma} with covering results from
the graph theory literature. Theorem \ref{thm:MainLemma} thus seems
to capture an essential connection between Castelnuovo-Mumford regularity
and pure graph-theoretic invariants.

\subsection{Split covers}

Although co-chordal covers per se have not been a topic of frequent
study, there are many results in the graph theory literature concerning
the $\mathcal{F}$-cover number of graphs for various subfamilies
of co-chordal graphs. We will review several of these with an eye
to regularity.\smallskip{}

A \emph{split graph} is a graph $H$ such that $V(H)$ can be partitioned
into a clique and an (induced) independent set. It is easy to see
that such graphs are both chordal and co-chordal; see e.g. \cite[Chapter 5]{Mahadev/Peled:1995}
for additional background. Covering the edges of $G$ with split graphs
allows us to prove Theorem \ref{thm:SplitCovering}.
\begin{proof}[Proof of Theorem \ref{thm:SplitCovering}.]
\emph{ (Essentially e.g. \cite[Lemma 7.5.2]{Mahadev/Peled:1995}).}
Let $H_{i}$ be the subgraph consisting of all edges incident to at
least one vertex in $J_{i}$. Each $H_{i}$ can be partitioned as
the clique on $J_{i}$ together with the independent set $V(G)\setminus V(J_{i})$.
Therefore, each $H_{i}$ is a split graph. Thus $H_{1},\dots,H_{s}$
is a split graph covering, hence a co-chordal covering. The result
follows by Theorem~\ref{thm:MainLemma}.
\end{proof}
To help clarify the meaning of the condition in Theorem \ref{thm:SplitCovering},
we notice that when $J_{0}=\emptyset$, the sets $J_{1},\dots,J_{s}$
are exactly an $s$-coloring of $\overline{G}$. 

However, the bound $\reg\left(R/\edgeideal(G)\right)\leq\chi(\overline{G})$
resulting from the $J_{0}=\emptyset$ case of Theorem \ref{thm:SplitCovering}
is trivial. Indeed, this bound follows from the inequalities $\chi(\overline{G})\geq\alpha(G)$
and $\alpha(G)\geq\reg\left(R/\edgeideal(G)\right)$. (The latter
is immediate by Hochster's formula, as discussed in Section \ref{sec:Lower-bounds},
since $\alpha(G)=\dim\ind\left(G\right)+1$ and $\tilde{H}_{i}(\Delta)$
always vanishes above $\dim\Delta$.)

\medskip{}

The proof of Theorem \ref{thm:SplitRecursiveFormulation} is entirely
similar:
\begin{proof}[Proof of Theorem \ref{thm:SplitRecursiveFormulation}.]
 Let $H$ consist of all edges incident to $J$. Then $H$ is a split
graph, with $E(G)=E(H)\cup E(G\setminus J)$, and the result follows
from (\ref{eq:Kalai-Meshulam_EdgeIdeal}).
\end{proof}
We now recall two results of Hà and Van Tuyl, for which we will give
new proofs via Theorem \ref{thm:SplitCovering}. The \emph{matching
number} of a graph $G$, denoted $\nu(G)$, is the size of a maximum
matching; that is, the maximum number of pairwise disjoint edges.
\begin{thm}
\emph{\label{thm:MaxlMatching} (Hà and Van Tuyl }\cite[Theorem 6.7]{Ha/VanTuyl:2008}\emph{)
}For any graph $G$, we have $\reg\left(R/\edgeideal(G)\right)\leq\nu(G)$.\end{thm}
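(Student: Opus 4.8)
The plan is to apply Theorem~\ref{thm:SplitCovering} directly to a maximum matching, reading its edges as two-element cliques. First I would fix a matching $M=\{e_1,\dots,e_s\}$ of maximum size $s=\nu(G)$, writing $e_i=\{a_i,b_i\}$. Since each edge is a clique on two vertices, I set $J_i=\{a_i,b_i\}$ for $1\le i\le s$, and let $J_0$ be the set of vertices of $G$ not covered by $M$. By construction the sets $J_0,J_1,\dots,J_s$ partition $V(G)$.

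The only fact needing verification is that $J_0$ induces an independent set, and I would check this using the maximality of $M$: if two vertices $u,v\in J_0$ were adjacent, then $M\cup\{\{u,v\}\}$ would be a strictly larger matching, contradicting the choice of $M$. Hence $J_0$ is independent while each $J_i$ (for $i\ge 1$) is a clique, so $V(G)$ is partitioned into an induced independent set together with $s=\nu(G)$ cliques, exactly as required by the hypothesis of Theorem~\ref{thm:SplitCovering}.

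Applying Theorem~\ref{thm:SplitCovering} then gives $\reg\left(R/\mathcal{I}(G)\right)\le s=\nu(G)$, completing the argument. I do not expect any genuine obstacle here: the entire content is the observation that a maximum matching supplies precisely the clique partition (into copies of $K_2$) demanded by Theorem~\ref{thm:SplitCovering}, with the uncovered vertices forced to be independent by maximality. This is why the result slots in as an immediate consequence of the split-covering machinery rather than requiring a separate covering construction.
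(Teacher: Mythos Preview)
Your proof is correct and matches the paper's approach exactly: the paper states that this is the special case of Theorem~\ref{thm:SplitCovering} where $J_1,\dots,J_s$ is a maximum size family of $2$-cliques, and you have simply written out the details, including the (implicit) verification that the unmatched vertices form an independent set by maximality of the matching.
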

\begin{proof}
This is the special case of Theorem \ref{thm:SplitCovering} where
$J_{1},\dots,J_{s}$ is a maximum size family of 2-cliques. 
\end{proof}
An easy (stronger) corollary of Theorem \ref{thm:SplitCovering} is
that $\reg(R/\edgeideal(G))$ is at most the size of a minimum maximal
matching. Indeed, we can regard Theorem \ref{thm:SplitCovering} as
it is stated to be a strong generalization of Theorem \ref{thm:MaxlMatching}.

We also give a new proof for:
\begin{thm}
\emph{\label{thm:ChordalGraph} (Hà and Van Tuyl }\cite[Corollary 6.9]{Ha/VanTuyl:2008}\emph{)
}If $G$ is a chordal graph, then $\reg\left(R/\edgeideal(G)\right)=\im G$.\end{thm}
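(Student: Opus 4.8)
The plan is to combine the general lower bound of Lemma~\ref{lem:InducedMatchingLB}, which already gives $\reg(R/\mathcal{I}(G))\ge\im G$, with a matching upper bound $\reg(R/\mathcal{I}(G))\le\im G$ that is special to chordal $G$. For the upper bound I would induct on the number of vertices, using that every induced subgraph of a chordal graph is again chordal, together with Dirac's theorem that a chordal graph with at least one edge has a simplicial vertex $v$ (one whose neighborhood $N(v)$ is a clique, so that $N[v]=N(v)\cup\{v\}$ is a clique). The base case is the edgeless graph, where both sides are $0$.

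The inductive engine is Theorem~\ref{thm:SplitRecursiveFormulation}: deleting the clique $J=N[v]$ gives $\reg(R/\mathcal{I}(G))\le\reg(R/\mathcal{I}(G\setminus N[v]))+1$. If I can arrange that $\im(G\setminus N[v])=\im G-1$, then the inductive hypothesis applied to the smaller chordal graph $G\setminus N[v]$ yields $\reg(R/\mathcal{I}(G\setminus N[v]))\le\im(G\setminus N[v])=\im G-1$, and hence $\reg(R/\mathcal{I}(G))\le\im G$, closing the induction.

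One inequality is free: for any clique $J$ and any induced matching $M$, at most one edge of $M$ can meet $J$, since two matched edges each having an endpoint in the clique $J$ would supply two adjacent vertices lying in distinct edges of $M$, contradicting that $M$ is induced. Deleting $J$ therefore destroys at most one edge of a maximum induced matching, so $\im(G\setminus J)\ge\im G-1$ for every clique $J$. Thus the whole argument reduces to producing a single simplicial vertex $v$ for which the reverse inequality holds, that is, for which $N[v]$ meets \emph{every} maximum induced matching of $G$.

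This existence statement is the crux, and is exactly where chordality is essential: it already fails for $C_5$, which has no simplicial vertex and where the gap $\im<\reg$ appears. I would establish it by an extremal argument, choosing a maximum induced matching together with a simplicial vertex $v$ in a way that is extremal relative to a perfect elimination ordering of $G$, and then showing that the clique $N[v]$ read off from the extremal end cannot be avoided by any competing maximum induced matching. The bookkeeping in this last step---ruling out a rival maximum induced matching disjoint from $N[v]$---is the main obstacle, and is the point at which the clique structure of chordal graphs (equivalently, the clique-tree or subtree-intersection model) must genuinely be used; the examples $P_n$, and small trees in which some simplicial vertices work while others do not, show that $v$ cannot be chosen naively. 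Once this lemma is in hand the induction completes, and since at each step the edges incident to $N[v]$ form a split (hence co-chordal) graph, the same argument builds a co-chordal cover of size $\im G$, so that in fact $\cochordcov G=\im G$ for chordal $G$ via Lemma~\ref{lem:MainLemma}.
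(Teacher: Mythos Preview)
Your proposal has a genuine gap at precisely the step you yourself flag as ``the main obstacle'': the existence of a simplicial vertex $v$ whose closed neighbourhood $N[v]$ meets every maximum induced matching of $G$. You offer only a plan (``an extremal argument'' relative to a perfect elimination ordering) without carrying it out, and you concede that naive choices of $v$ can fail. This lemma is not routine bookkeeping; it is the combinatorial heart of the matter, and nothing in your sketch explains why the extremal choice actually works. Until that argument is supplied, the induction does not close and the upper bound $\reg(R/\mathcal{I}(G))\le\im G$ is unproved.

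By contrast, the paper's proof is a one-line citation: Cameron showed that for chordal $G$ the split cover number equals $\im G$, so the sandwich $\im G \le \reg(R/\mathcal{I}(G)) \le \cochordcov G$ of~(\ref{eq:bounds}) collapses. Your inductive scheme, if completed, would amount to reproving Cameron's theorem by a different route---simplicial-vertex deletion rather than the $G^{*}$ clique-partition method outlined after Theorem~\ref{thm:WeaklyChordal}. That could be a worthwhile self-contained alternative, but as written you have deferred exactly the step where chordality must do real work, and replaced a citation to a proved theorem with an unproved lemma of comparable difficulty.
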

\begin{proof}[Proof (of Theorem \ref{thm:ChordalGraph})]
Cameron \cite{Cameron:1989} observed that a chordal graph $G$ has
split cover number (as in Theorem \ref{thm:SplitCovering}) equal
to $\im G$; the result follows by (\ref{eq:bounds}).
\end{proof}

\subsection{Weakly chordal graphs, and techniques for finding co-chordal covers}

We can considerably extend Theorem \ref{thm:ChordalGraph} by considering
more general covers. A graph $G$ is \emph{weakly chordal} if every
induced cycle in both $G$ and $\overline{G}$ has length at most
4. (It is straightforward to show that a chordal graph is weakly chordal.)
\begin{thm}
\label{thm:WeaklyChordal} If $G$ is a weakly chordal graph, then
$\reg\left(R/\edgeideal(G)\right)=\im G$.\end{thm}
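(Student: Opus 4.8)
The plan is to establish the two inequalities in $\im G \leq \reg\left(R/\mathcal{I}(G)\right) \leq \im G$ separately. The lower bound $\reg\left(R/\mathcal{I}(G)\right) \geq \im G$ is already immediate from Lemma~\ref{lem:InducedMatchingLB} and holds for every graph. So the entire content of the theorem lies in the upper bound, and by Lemma~\ref{lem:MainLemma} it suffices to prove that every weakly chordal graph $G$ satisfies $\cochordcov G \leq \im G$. This reduces the statement to a purely graph-theoretic covering fact: a weakly chordal graph can be covered by $\im G$ co-chordal subgraphs.

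First I would look for the structural property of weakly chordal graphs that produces co-chordal subgraphs. The key point is that for weakly chordal graphs the complement $\overline{G}$ is again weakly chordal (the definition is symmetric in $G$ and $\overline{G}$), and weakly chordal graphs are perfect, so I can pass freely between coloring and clique-covering data on either side. The natural approach mirrors the proof of Theorem~\ref{thm:ChordalGraph}: Cameron's observation handled the chordal case by matching the split-cover number against $\im G$, so I would hunt in the literature on weakly chordal graphs for the analogous statement, namely that the co-chordal (or split, or more general) cover number equals the induced matching number. I expect the correct tool to be a result guaranteeing that in a weakly chordal graph one can peel off a co-chordal subgraph capturing at least one "unit" of the induced matching at a time, or equivalently that the edges decompose into $\im G$ co-chordal pieces.

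The key steps, in order, would be: (1) record that $\reg\left(R/\mathcal{I}(G)\right) \geq \im G$ is free from Lemma~\ref{lem:InducedMatchingLB}; (2) invoke Lemma~\ref{lem:MainLemma} to reduce the upper bound to showing $\cochordcov G \leq \im G$; (3) use weak chordality (and the self-complementary nature of the definition, plus perfection) to produce a co-chordal cover of size exactly $\im G$, either by citing an existing graph-theoretic covering theorem or by an explicit inductive peeling argument; and (4) combine to sandwich regularity between $\im G$ and $\im G$.

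The main obstacle will be step (3): producing a co-chordal cover of the correct size $\im G$. For chordal graphs Cameron supplied this directly, but weakly chordal graphs are genuinely more complicated, so either a stronger covering theorem must be located in the induced-matching literature cited earlier (e.g.\ \cite{Cameron:2004, Golumbic/Lewenstein:2000}) or the peeling argument must be made to work while keeping precise control that each removed piece is co-chordal \emph{and} that the total count does not exceed $\im G$. Controlling both the co-chordality of the pieces and their number simultaneously is the delicate part; the regularity bound itself is then a formal consequence.
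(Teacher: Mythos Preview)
Your proposal is correct and matches the paper's approach exactly: the paper invokes the inequalities \eqref{eq:bounds} and then cites Busch, Dragan, and Sritharan \cite[Proposition~3]{Busch/Dragan/Sritharan:2010} for the graph-theoretic fact $\im G=\cochordcov G$ when $G$ is weakly chordal (with \cite[Corollary~1]{Abueida/Busch/Sritharan:2010} noted as the earlier bipartite case). Your step~(3) is resolved by that citation rather than the references you guessed; the paper also sketches the $G^{*}$ technique (square of the line graph) behind their proof in the discussion immediately following.
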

\begin{proof}
Busch, Dragan, and Sritharan \cite[Proposition 3]{Busch/Dragan/Sritharan:2010}
show that $\im G=\cochordcov G$ for any weakly chordal graph $G$.
(Abueida, Busch, and Sritharan \cite[Corollary 1]{Abueida/Busch/Sritharan:2010}
earlier showed the same result under the additional assumption that
$G$ is bipartite.)
\end{proof}
The essential technique introduced in \cite{Cameron:1989} and further
developed in \cite{Abueida/Busch/Sritharan:2010,Busch/Dragan/Sritharan:2010}
is to examine a derived graph $G^{*}$, with vertices corresponding
to the edges of $G$, and two edges adjacent unless they form an induced
matching in $G$. Thus, an independent set of $G^{*}$ corresponds
to an induced matching of $G$. (In graph-theoretic terms, $G^{*}$
is the square of the line graph of $G$.)

In a weakly chordal \cite{Busch/Dragan/Sritharan:2010} (chordal \cite{Cameron:1989},
chordal bipartite \cite{Abueida/Busch/Sritharan:2010}) graph, these
papers show that 
\begin{enumerate}
\item [i) ]$G^{*}$ is perfect, so that there is a partition of the vertices
of $G^{*}$ into $\alpha(G^{*})$ cliques, and 
\item [ii) ]that the subgraph of $G$ corresponding to a maximal clique
of $G^{*}$ is co-chordal. 
\end{enumerate}
The equality of $\im G$ and $\cochordcov G$ follows.

We use a modification of this approach to prove Theorem \ref{thm:WellCoveredBipariteCochord}
below.

\subsection{Biclique and chain graph covers}

Following our terminology from Section \ref{sub:Notation}, the biclique
cover number of a graph $G$ is the minimum number of bicliques (complete
bipartite graphs) required to cover the edges of $G$. As a complete
bipartite graph $K_{m,n}$ is clearly co-chordal, the biclique cover
number is an upper bound for $\cochordcov G$. More generally, it
is straightforward to show that a bipartite graph $G$ is co-chordal
if and only $\im G=1$. Bipartite co-chordal graphs have been called
\emph{chain graphs}.

Recall that a graph is \emph{well-covered} if every maximal independent
set has the same cardinality. Kumini showed:
\begin{thm}
\emph{(Kumini \cite{Kummini:2009})} \label{thm:RegWellcoveredBipartite}
If $G$ is a well-covered bipartite graph, then $\reg\left(R/\edgeideal(G)\right)=\im G$.
\end{thm}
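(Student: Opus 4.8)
The plan is to derive the equality from the sandwich bound (\ref{eq:bounds}), which already supplies $\im G\leq\reg\left(R/\mathcal{I}(G)\right)\leq\cochordcov G$ and in particular $\im G\leq\cochordcov G$. Thus it suffices to exhibit a co-chordal cover of $G$ using only $\im G$ subgraphs, i.e.\ to prove the purely graph-theoretic inequality $\cochordcov G\leq\im G$ for well-covered bipartite $G$; equality throughout (\ref{eq:bounds}) then collapses to $\reg\left(R/\mathcal{I}(G)\right)=\im G$ by Lemma \ref{lem:MainLemma}. So I would reduce the theorem at once to a covering statement and spend all the effort there.

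To build the cover I would imitate, with modifications, the derived-graph method behind Theorem \ref{thm:WeaklyChordal}. First I would invoke the structure theory of well-covered bipartite graphs (due to Ravindra): after discarding isolated vertices (which contribute nothing to $\mathcal{I}(G)$), such a graph carries a perfect matching $F=\{x_1y_1,\dots,x_my_m\}$ so that, for each $i$, the subgraph induced on $N(x_i)\cup N(y_i)$ is complete bipartite. Each of these neighborhood subgraphs is a biclique, hence a chain graph, and a direct check shows that every edge $x_ay_b$ of $G$ lies in the neighborhood subgraph attached to the matching edge $x_ay_a$ (since $x_a\in N(y_a)$ and $y_b\in N(x_a)$). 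This yields a chain-graph cover of size $m=\nu(G)$, recovering the H\`a--Van Tuyl bound; but $\nu(G)$ can exceed $\im G$, as already for $C_4$, so the genuine task is to merge these $m$ bicliques down to exactly $\im G$ pieces.

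For the merging I would pass to the derived graph $G^{*}$ introduced above, whose vertices are the edges of $G$ with two edges adjacent unless they form an induced matching, so that independent sets of $G^{*}$ are the induced matchings of $G$ and $\alpha(G^{*})=\im G$. Following Cameron \cite{Cameron:1989} and Busch--Dragan--Sritharan \cite{Busch/Dragan/Sritharan:2010}, I want (i) a partition of $E(G)$ into $\im G$ cliques of $G^{*}$ and (ii) the subgraph of $G$ spanned by each clique to be a chain graph. The modification is forced by the fact that well-covered bipartite graphs need not be weakly chordal --- they may contain an induced $C_6$ --- so $G^{*}$ need not be perfect and the ``maximal cliques are co-chordal'' argument of Theorem \ref{thm:WeaklyChordal} is unavailable. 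Instead I would let $F$ and its complete-bipartite neighborhoods steer the partition: group the matching edges according to a minimum clique cover of the induced subgraph $G^{*}[F]$, route each remaining edge into the class of a matching edge meeting one of its endpoints, and then verify co-chordality class by class.

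The main obstacle is precisely these two verifications. For (i) I must show that a maximum induced matching of $G$ can be chosen inside $F$, so that $G^{*}[F]$ admits a clique cover of the correct size $\im G$; this is exactly where the well-covered hypothesis, through the complete-bipartite neighborhood condition, should be decisive. For (ii) I must rule out an induced $2K_2$ inside each merged class --- equivalently, show that whenever two disjoint edges land in the same class a connecting edge lands there too --- and the dangerous configuration to exclude is again the induced $C_6$, which the neighborhood structure of $F$ must be shown to break up across classes. Once both points are secured, the cover has size $\im G$, and Lemma \ref{lem:MainLemma} together with (\ref{eq:bounds}) finishes the argument.
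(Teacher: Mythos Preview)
Your overall strategy is exactly the paper's (Theorem~\ref{thm:WellCoveredBipariteCochord} together with Lemma~\ref{lem:CochordalCovWCBip}): reduce to $\cochordcov G\leq\im G$ via (\ref{eq:bounds}), take Ravindra's perfect matching $F$, work in the derived graph $M^{*}:=G^{*}[F]$ on the matching edges, cover the vertices of $M^{*}$ by cliques, and extend each clique to a co-chordal subgraph of $G$ by adjoining all incident edges. Your step~(ii) is precisely Lemma~\ref{lem:CochordalCovWCBip}, and your sketch of it is on target.

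The gap is in step~(i). You frame the obstacle as showing that a maximum induced matching of $G$ can be chosen inside $F$, i.e.\ that $\alpha(M^{*})=\im G$. But that equality, even once established, does not by itself produce a clique cover of $M^{*}$ of size $\im G$: for that you need $\chi\bigl(\overline{M^{*}}\bigr)=\alpha(M^{*})$, which is perfection of $M^{*}$. This is the crucial ingredient you have not isolated, and the complete-bipartite neighborhood condition does not obviously deliver it. The paper closes this gap by citing a structural observation from Kumini's own paper \cite[Discussion 2.8]{Kummini:2009}: after identifying vertices with identical closed neighborhoods, $M^{*}$ becomes a comparability graph, hence perfect, and so $M^{*}$ itself is perfect. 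With perfection in hand the chain
\[
\im G\;\leq\;\cochordcov G\;\leq\;\chi\bigl(\overline{M^{*}}\bigr)\;=\;\alpha(M^{*})\;\leq\;\im G
\]
collapses; note in particular that only the trivial inequality $\alpha(M^{*})\leq\im G$ is ever used, not the direction you flagged as the main difficulty.
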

\noindent We recover Theorem \ref{thm:RegWellcoveredBipartite} as
a corollary of the following chain graph covering result:
\begin{thm}
\label{thm:WellCoveredBipariteCochord} If $G$ is a well-covered
bipartite graph, then $\im G=\cochordcov G$.
\end{thm}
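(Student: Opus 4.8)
The plan is to sandwich the two invariants between the width of a poset extracted from the structure of $G$. One inequality, $\im G \le \cochordcov G$, holds for every graph and is already recorded in \eqref{eq:bounds}, so the entire content is to produce a co-chordal cover of size $\im G$. I would do this by building, from the well-covered bipartite structure, a partial order $P$ whose width $w$ satisfies simultaneously $w \le \im G$ (via an explicit induced matching) and $\cochordcov G \le w$ (via an explicit co-chordal cover). Chaining these with the general bound gives $\im G \le \cochordcov G \le w \le \im G$, forcing equality throughout; in particular no separate computation of $\im G$ is needed.

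First I would invoke the standard structural characterization of unmixed --- equivalently well-covered --- bipartite graphs due to Villarreal: after deleting isolated vertices (which change neither $\im$ nor $\cochordcov$, and preserve well-coveredness), $G$ has a perfect matching $\{x_i,y_i\}$ on its bipartition $X\cup Y$ such that whenever $\{x_i,y_j\}$ and $\{x_j,y_k\}$ are edges with $i,j,k$ distinct, $\{x_i,y_k\}$ is also an edge. I then define a preorder on the index set by $i\preceq_0 j$ iff $i=j$ or $\{x_i,y_j\}\in E(G)$; reflexivity is the perfect matching and transitivity is exactly the displayed condition. Passing to the quotient poset $P$ under $i\sim j \iff i\preceq_0 j\preceq_0 i$ yields the crucial clean dictionary: writing $[a]$ for the class of an index $a$, one has $\{x_a,y_b\}\in E(G)$ if and only if $[a]\preceq_P [b]$. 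In particular each class induces a complete bipartite graph and all edges between two comparable classes are present. From this the lower bound $\mathrm{width}(P)\le \im G$ is immediate: choosing one representative per class of a maximum antichain, the corresponding matching edges form an induced matching, since incomparable classes carry no cross-edges in either direction.

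For the upper bound I would take, by Dilworth's theorem, a partition of $P$ into $w$ chains $K_1,\dots,K_w$, and set $H_t$ to be the subgraph of all edges of $G$ incident to $Y_t:=\{y_b : [b]\in K_t\}$. Because $G$ is bipartite every edge has a unique $Y$-endpoint, so the $H_t$ partition $E(G)$ and hence cover $G$ using exactly $w$ subgraphs. Each nonempty $H_t$ is co-chordal because its $Y$-side neighborhoods are nested: for $y_b,y_d\in Y_t$ with $[b]\preceq [d]$ (comparable, as $K_t$ is a chain), the dictionary gives $N(y_b)=\{x_a : [a]\preceq [b]\}\subseteq N(y_d)$. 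Nesting forbids an induced $2K_2$, so $\im H_t\le 1$, and a bipartite graph with $\im\le 1$ is co-chordal by the criterion recalled earlier. This yields $\cochordcov G\le w$ and completes the sandwich.

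The main obstacle --- and the step where the earlier square-of-line-graph argument genuinely needs to be modified --- is choosing the covering subgraphs correctly. The naive choice of the induced subgraphs of $G$ supported on a chain of classes fails, because two comparable classes split between different chains of a Dilworth partition leave their connecting edges uncovered; so a chain \emph{decomposition} of the edges does not work directly. The resolution is to index the co-chordal pieces by their top ($Y$-side) endpoints rather than by intervals of classes, so that $H_t$ absorbs every edge whose $Y$-endpoint lies in $K_t$; the neighborhood nesting that makes $H_t$ co-chordal is then a direct consequence of transitivity in Villarreal's condition. The remaining points are routine: verifying well-definedness of $\preceq_P$ and of the edge-dictionary, and the reduction to the isolated-vertex-free case.
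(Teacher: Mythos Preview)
Your argument is correct and rests on the same structure the paper exploits: the partial order on matching-edge indices, with antichains giving induced matchings and chains giving co-chordal pieces. The paper phrases this through the auxiliary graph $M^{*}$ on the edges of a perfect matching, cites Kummini to identify (the quotient of) $M^{*}$ as a comparability graph and hence perfect, so that $\alpha(M^{*})=\chi(\overline{M^{*}})$, and proves a separate technical lemma (Lemma~\ref{lem:CochordalCovWCBip}, using the Ravindra--Favaron ``complete bipartite neighborhood'' form of the characterization) to show that cliques in $M^{*}$ extend to co-chordal subgraphs. Your route is more self-contained: Villarreal's transitivity form of the characterization hands you the preorder directly, Dilworth's theorem replaces the perfect-graph step, and your co-chordal pieces $H_{t}$ (all edges with $Y$-endpoint in a fixed chain) are chain graphs in one line via nested neighborhoods. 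At heart the two proofs coincide, since $M^{*}$ is exactly the comparability graph of your preorder and Dilworth is the relevant instance of perfectness; what your packaging buys is avoiding both the appeal to \cite{Kummini:2009} and the ad hoc Lemma~\ref{lem:CochordalCovWCBip}, at the cost of invoking the stronger (Villarreal) form of Lemma~\ref{lem:WellCoveredBipChar}.
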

In order to prove Theorem \ref{thm:WellCoveredBipariteCochord}, we
will need two lemmas. First, well-covered bipartite graphs have long
been known to admit a simple characterization:
\begin{lem}
\emph{\label{lem:WellCoveredBipChar} (Ravindra \cite{Ravindra:1977},
Favaron \cite{Favaron:1982}; see also Villarreal} \cite{Villarreal:2007}\emph{)}
If $G$ is a well-covered bipartite graph with no isolated vertices,
then $G$ has a perfect matching. Moreover, in every perfect matching
$M$ of $G$ the neighborhood of any edge in $M$ is complete bipartite.
\end{lem}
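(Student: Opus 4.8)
The plan is to fix a bipartition $V(G)=A\disjointunion B$ and argue in two stages: first produce a perfect matching, then analyze its local structure.

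For the existence of a perfect matching, the first step is to observe that \emph{both} $A$ and $B$ are maximal independent sets. Each is independent because $G$ is bipartite, and each is maximal because every vertex of the opposite part has a neighbor (this is exactly where the ``no isolated vertices'' hypothesis enters), and that neighbor necessarily lies in the part under consideration. Well-coveredness then forces $|A|=|B|=:k$, and it further forces every maximal independent set --- in particular a maximum one --- to have size exactly $k$. The complement of a maximum independent set is a minimum vertex cover, of size $|V(G)|-k=k$, so K\"onig's theorem yields a matching of size $k$; since $k=|A|=|B|$, such a matching saturates every vertex and is therefore perfect.

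For the second stage I would exploit a counting principle forced by the presence of a perfect matching $M$: because the two endpoints of each edge of $M$ are adjacent, any independent set meets each of the $k$ edges of $M$ in at most one vertex, so every independent set has at most $k$ elements, and a maximal (hence size-$k$) independent set must contain exactly one endpoint of \emph{each} edge of $M$. Now fix an edge $\{a,b\}\in M$ with $a\in A$ and $b\in B$, and suppose toward a contradiction that the induced subgraph on $N(a)\cup N(b)$ is not complete bipartite; that is, there are $a'\in N(b)\subseteq A$ and $b'\in N(a)\subseteq B$ that are non-adjacent. Then $\{a',b'\}$ is independent, so I would extend it to a maximal independent set $I$. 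Since $a'\in I$ is adjacent to $b$ and $b'\in I$ is adjacent to $a$, neither $a$ nor $b$ lies in $I$; thus $I$ avoids both endpoints of the matching edge $\{a,b\}$, forcing $|I|\leq k-1$ and contradicting well-coveredness.

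The genuinely clever point --- and thus the main obstacle --- is the counting observation in the second stage: that a perfect matching caps the size of every independent set at $k$ and therefore converts ``missing one matching edge entirely'' into a size deficiency. Everything else (the maximality of $A$ and $B$, the K\"onig computation, and the extension of $\{a',b'\}$ to a maximal set) is routine once this quantity is identified. The only care needed is to verify that the non-adjacency of $a'$ and $b'$ really does eject both $a$ and $b$ from $I$, which is precisely where the defining adjacencies $a'\in N(b)$ and $b'\in N(a)$ are used.
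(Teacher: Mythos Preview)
The paper does not actually prove this lemma: it is stated as a known characterization due to Ravindra and Favaron (with a reference also to Villarreal) and is invoked without argument. So there is no ``paper's own proof'' to compare against.

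That said, your argument is correct and self-contained. The first stage is the standard one: the bipartition classes are maximal independent sets (no isolated vertices), so well-coveredness forces $|A|=|B|=k=\alpha(G)$, and K\"onig's theorem then produces a perfect matching. The second stage is also sound; the key counting observation---that a maximal independent set must pick exactly one endpoint from each matching edge---is exactly the right lever, and your contradiction from a non-adjacent pair $a'\in N(b)$, $b'\in N(a)$ goes through cleanly (note that $a'\neq a$ and $b'\neq b$ are automatic, since $a$ is adjacent to all of $N(a)$ and likewise for $b$). This is essentially the classical proof one finds in the sources the paper cites.
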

We will also need the following technical lemma. Two edges are \emph{incident}
if they share a vertex; in particular, we consider an edge to be incident
to itself.
\begin{lem}
\label{lem:CochordalCovWCBip} Let $G$ be a well-covered bipartite
graph, and $M$ a perfect matching in $G$. Let $M_{0}$ be a subset
of $M$ so that no pair of edges in $M_{0}$ form an induced matching
in $G$. Then the subgraph $H$ of $G$ consisting of all edges incident
to $M_{0}$ has $\im H=1$, and is in particular co-chordal.\end{lem}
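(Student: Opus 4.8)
The plan is to prove the quantitative claim $\im H=1$; the co-chordality then comes for free, since $H$ is bipartite (being a subgraph of the bipartite graph $G$), and we observed above that a bipartite graph is co-chordal if and only if its induced matching number equals $1$. As $M_0$ is nonempty (the case $M_0=\emptyset$ being vacuous), $H$ has an edge and so $\im H\geq 1$; thus the whole difficulty is the reverse inequality, namely that $H$ contains no induced $2K_2$.

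Accordingly, I would suppose for contradiction that disjoint edges $f=\{x,y\}$ and $g=\{x',y'\}$ of $H$ span an induced $2K_2$, with $x,x'$ and $y,y'$ lying in the two parts of the bipartition of $G$. Each of $f,g$ is incident to $M_0$, so $f$ meets some $e=\{a,b\}\in M_0$ and $g$ meets some $e'=\{a',b'\}\in M_0$, labeled so that $a,a'$ and $b,b'$ sit in the respective parts. The two tools available are the complete-bipartite-neighborhood property supplied by Lemma \ref{lem:WellCoveredBipChar}, and the hypothesis that no two edges of $M_0$ induce a matching, which forces a cross edge between $e$ and $e'$ whenever $e\neq e'$. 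I would first clear the easy case $e=e'$: two distinct disjoint edges of $H$ both meeting the single matching edge $\{a,b\}$ must meet it in opposite endpoints, whence $\{a,b\}\in M_0\subseteq E(H)$ is itself an edge of $H$ joining $f$ to $g$, contradicting that they induce a $2K_2$.

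The substantive case is $e\neq e'$. Here I would apply the $M_0$-hypothesis to obtain a cross edge, say $\{a,b'\}\in E(G)$, and then push this adjacency through the complete-bipartite-neighborhood property at $e$ and at $e'$ to propagate further adjacencies among the four endpoints of $f$ and $g$. Producing an edge of $G$ between $f$ and $g$ is \emph{not} the difficulty, since the complete bipartite condition makes such edges abundant. The step I expect to be the real obstacle is instead ensuring that some joining edge lies in $H$ rather than merely in $G$: an edge of $G$ between the two \emph{outer} endpoints of $f$ and $g$ (those not lying on $e$ or $e'$) need not be incident to $M_0$, and hence need not belong to $H$. To force a joining edge that does meet $V(M_0)$, for instance one of the form $\{a',b\}$ with $a'\in V(M_0)$, I would bring in the full strength of well-coveredness by applying Lemma \ref{lem:WellCoveredBipChar} also to the matching edges of $M$ that cover those outer endpoints. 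I would then finish with a short case analysis on which endpoints of $e$ and $e'$ are used by $f$ and $g$, in each case exhibiting a joining edge and verifying that it is incident to $V(M_0)$, thereby contradicting the assumed induced $2K_2$.
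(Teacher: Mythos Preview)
The paper's approach is structurally different from yours. Rather than taking two arbitrary disjoint edges of $H$ and trying to join them inside $H$, the paper first reduces to an asymmetric statement: it claims that, thanks to the complete-bipartite-neighborhood property of edges in $M$, it suffices to show that an arbitrary edge $e\in E(H)$ and an arbitrary matching edge $c_0\in M_0$ never form an induced $2K_2$ in $G$. The paper then proves only this: if $e\in M_0$ the hypothesis on $M_0$ gives a cross edge directly; otherwise $e$ meets some $c_1\in M_0$, and one routes from $c_0$ through $c_1$ using the complete-bipartite neighborhood of $c_1$. So the paper never confronts the ``two generic $H$-edges'' configuration you set up.

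You have, however, correctly located the real obstacle, and it is fatal to the case analysis you sketch. Take $A=\{1,2,3,4\}$, $B=\{1',2',3',4'\}$, perfect matching $M=\{11',22',33',44'\}$, and edge set
\[
\{11',\,22',\,33',\,44',\,13',\,21',\,23',\,41',\,42',\,43'\}.
\]
Each $N[ii']$ induces a complete bipartite graph, so $G$ is a well-covered bipartite graph. With $M_0=\{11',22'\}$ (a legitimate choice, since $21'$ joins them), the subgraph $H$ of edges incident to $M_0$ contains the disjoint edges $13'$ and $42'$; the unique $G$-edge between them is $43'$, which meets no vertex of $V(M_0)$ and hence does not belong to $H$. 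Thus $\im H=2$, and no case analysis on the matching edges through the outer vertices $3'$ and $4$ can manufacture a joining edge inside $H$. What the paper's argument genuinely establishes is the weaker fact that no two edges of $H$ form an induced $2K_2$ \emph{in $G$}; the passage from this to $\im H=1$ is exactly where both your outline and the paper's ``it suffices'' step silently assume that the joining edge produced in $G$ automatically lies in $H$.
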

\begin{proof}
Since the neighborhood of any edge in $M$ is complete bipartite,
it suffices to show that if $e$ is an edge of $H$ and $c_{0}$ an
edge of $M_{0}$, then $e$ and $c_{0}$ do not form a $2K_{2}$;
that is, that there is some edge of $G$ incident to both $e$ and
$c_{0}$.

If $e\in M_{0}$ then this is immediate by the hypothesis. Otherwise,
$e=\{x,y\}$ where $y$ is in some edge $c_{1}=\{y,z\}$ of $M_{0}$.
By the hypothesis on $M$, either $y$ or $z$ is in some edge $b$
incident to $c_{0}$. If $y\in b$ then we are done. Otherwise, $b=\{z,w\}$
with $w\in c_{0}$. But then $w$ and $x$ are both neighbors of $c_{1}$,
hence adjacent by Lemma \ref{lem:WellCoveredBipChar}.
\end{proof}

\begin{proof}[Proof of Theorem \ref{thm:WellCoveredBipariteCochord}]

Assume without loss of generality that $G$ has no isolated vertices,
and let $M$ be a perfect matching, as guaranteed to exist by Lemma
\ref{lem:WellCoveredBipChar}. We construct a new graph $M^{*}$ with
vertices consisting of the edges of $M$, and with two vertices adjacent
unless they form an induced matching in $G$. Thus, $M^{*}$ is an
induced subgraph of the graph $G^{*}$ from the discussion following
Theorem \ref{thm:WeaklyChordal}.

Any independent set in $M^{*}$ still corresponds to an induced matching
of $G$, so that $\alpha(M^{*})\leq\im G$. On the other hand, if
$K^{*}$ is a clique in $M^{*}$, then Lemma \ref{lem:CochordalCovWCBip}
gives the subgraph of all incident edges to be co-chordal. Since every
edge in $G$ is incident to at least one edge of $M$, we get that
$\cochordcov G\leq\chi(\overline{M^{*}})$.

But Kumini shows \cite[Discussion 2.8]{Kummini:2009} that the graph
obtained from $M^{*}$ by identifying pairs of vertices $v$ and $w$
with $N[v]=N[w]$ is a comparability graph, hence perfect; so $M^{*}$
is perfect by e.g. Diestel \cite[Lemma 5.5.5]{Diestel:2005}. Hence,
we have that $\alpha(M^{*})=\chi(\overline{M^{*}})$, and the result
follows.
\end{proof}
We remark that in Theorem \ref{thm:WeaklyChordal}, we apply a result
from the graph theory literature to prove a new result on regularity;
while in Theorem \ref{thm:WellCoveredBipariteCochord}, a result from
combinatorial commutative algebra guides us to a new min-max result
on well-covered bipartite graphs.

\subsection{Co-interval covers and boxicity}

An \emph{interval graph} is a graph with vertices corresponding to
some set of intervals in $\mathbb{R}$, and edges between pairs of
intervals that have non-empty intersection. A \emph{co-interval graph}
is the complement of an interval graph. Interval graphs are exactly
the chordal graphs which can be represented as the incomparability
graph of a poset. See \cite{Mahadev/Peled:1995} for general background
on such graphs.

The \emph{boxicity} of $G$, denoted $\boxicity G$, is the co-interval
cover number of $\overline{G}$. (The original formulation of boxicity
was somewhat different, and the connection with covering is made in
\cite{Cozzens/Roberts:1983}.) Thus by Theorem \ref{thm:MainLemma}
we have that $\reg\left(R/\edgeideal(G)\right)\leq\boxicity\overline{G}$.\medskip{}

Since a planar graph $G$ contains no $K_{5}$ subgraph, we have that
$\reg\left(R/\edgeideal(\overline{G})\right)\leq\dim\ind\left(\overline{G}\right)+1=\alpha\left(\overline{G}\right)\leq4$.
The literature on boxicity yields a stronger result:
\begin{prop}
\label{prop:CoplanarBound} If $G$ is a planar graph, then $\reg\left(R/\edgeideal(\overline{G})\right)\leq3$.
This upper bound is the best possible.\end{prop}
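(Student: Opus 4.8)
The plan is to read off the upper bound from the covering inequality established just above and a known bound on the boxicity of planar graphs. Specializing the relation $\reg\left(R/\mathcal{I}(G)\right)\leq\boxicity\overline{G}$ (derived from Lemma \ref{lem:MainLemma}) to the graph $\overline{G}$ in place of $G$, and using $\overline{\overline{G}}=G$, I would first record that
\[
\reg\left(R/\mathcal{I}(\overline{G})\right)\leq\boxicity G.
\]
The proposition then reduces to the purely graph-theoretic assertion that every planar graph has boxicity at most $3$. This is a theorem of Thomassen, so the upper bound $\reg\left(R/\mathcal{I}(\overline{G})\right)\leq3$ follows at once. I expect this external input (Thomassen's planar boxicity bound) to be the only substantive ingredient; the passage to regularity is formal, exactly as in the interval/boxicity discussion preceding the statement.

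For the ``best possible'' claim, the plan is to exhibit a single planar graph attaining the bound. The natural candidate is the octahedron $G=K_{2,2,2}$: it is planar (a $3$-connected Platonic graph), and since the only non-adjacent pairs of $K_{2,2,2}$ are the three within-part pairs, its complement is $\overline{G}=3K_{2}$, the disjoint union of three edges. A single edge $K_{2}$ is co-chordal with $\reg\left(R/\mathcal{I}(K_{2})\right)=1$, so by (\ref{eq:DisjointUnionReg}) we get $\reg\left(R/\mathcal{I}(3K_{2})\right)=3$. Equivalently, $\im(3K_{2})=3$, so Lemma \ref{lem:InducedMatchingLB} forces $\reg\left(R/\mathcal{I}(\overline{G})\right)\geq3$, and combined with the upper bound just proved this gives equality. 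Hence the constant $3$ cannot be improved.

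The main obstacle is locating and correctly invoking the boxicity bound for planar graphs; once that is cited, both the upper bound and the extremal octahedron example are routine. A secondary point worth a sentence is to note explicitly that $K_{2,2,2}$ is planar and that its complement is $3K_{2}$, so that the reader sees why this particular graph certifies sharpness.
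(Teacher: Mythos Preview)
Your proposal is correct and matches the paper's proof essentially line for line: the upper bound comes from Thomassen's result that planar graphs have boxicity at most $3$ combined with the boxicity--regularity inequality, and sharpness is witnessed by the octahedron $K_{2,2,2}$, whose complement is $3K_{2}$ with regularity $3$. The only difference is that you spell out the computation of $\reg\left(R/\mathcal{I}(3K_{2})\right)$ via (\ref{eq:DisjointUnionReg}) or Lemma~\ref{lem:InducedMatchingLB}, whereas the paper leaves this implicit.
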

\begin{proof}
Thomassen \cite{Thomassen:1986} proves that $\boxicity G\leq3$.
To see the bound is best possible, notice that the complement of $3K_{2}$
(that is, the graph consisting of 3 disjoint edges) is the 1-skeleton
of the octahedron, which is well-known to be planar.
\end{proof}
By way of contrast, we remark that the proof of Proposition \ref{prop:UB-LB_gap}
shows that if $G$ is a planar graph, then $\reg\left(R/\edgeideal(G)\right)$
may be arbitrarily large.

\subsection{Very well-covered graphs}

In this subsection we present a negative result. A graph is \emph{very
well-covered} if it is well-covered and $\alpha(G)=\left|V\right|/2$.
It is obvious that every well-covered bipartite graph is very well-covered.
Mahmoudi et al.~\cite{Mahmoudi/Mousivand/Crupi/Rinaldo/Terai/Yassemi:2011}
generalized Theorem \ref{thm:RegWellcoveredBipartite} to show:
\begin{thm}
\emph{\label{thm:VerywellcoveredReg} (Mahmoudi, Mousivand, Crupi,
Rinaldo, Terai, and Yassemi }\cite{Mahmoudi/Mousivand/Crupi/Rinaldo/Terai/Yassemi:2011}\emph{)}\\
\emph{ }If $G$ is a very well-covered graph, then $\reg\left(R/\edgeideal(G)\right)=\im G$.
\end{thm}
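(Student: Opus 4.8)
The lower bound $\reg\left(R/\mathcal{I}(G)\right)\geq\im G$ is free from Lemma \ref{lem:InducedMatchingLB}, so the entire content is the reverse inequality $\reg\left(R/\mathcal{I}(G)\right)\leq\im G$. The obvious plan, following the template of this section, is to squeeze $\reg$ between the bounds of (\ref{eq:bounds}) by proving $\cochordcov G=\im G$. Since every well-covered bipartite graph is very well-covered, this would directly generalize Theorem \ref{thm:WellCoveredBipariteCochord}, and one would hope to recycle its proof verbatim.

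Concretely, I would first try to generalize Lemma \ref{lem:WellCoveredBipChar}: very well-covered graphs without isolated vertices are known to admit a perfect matching $M$ whose edges carry a transitivity-like structure on their neighborhoods. Given such an $M$, I would form the auxiliary graph $M^{*}$ exactly as in the proof of Theorem \ref{thm:WellCoveredBipariteCochord} (vertices the edges of $M$, two adjacent unless they induce a $2K_{2}$), so that $\alpha(M^{*})\leq\im G$, and so that a partition of $V(M^{*})$ into cliques yields a co-chordal cover of $G$ of the corresponding size. The two things to verify would then be (i) that $M^{*}$ is perfect, so that $\chi(\overline{M^{*}})=\alpha(M^{*})$, and (ii) an analog of Lemma \ref{lem:CochordalCovWCBip}, namely that the subgraph of $G$ spanned by the edges incident to a clique of $M^{*}$ is co-chordal.

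I expect step (ii) to be the fatal obstacle, and indeed I expect the whole covering strategy to fail. In the bipartite setting Lemma \ref{lem:CochordalCovWCBip} rests on ``opposite endpoints are adjacent'' arguments that exploit the absence of odd cycles; dropping bipartiteness should permit a clique of $M^{*}$ whose incident edges still contain an induced $2K_{2}$, so that the covering subgraphs are no longer co-chordal. More seriously, I anticipate that $\cochordcov G$ can strictly exceed $\im G$ for very well-covered $G$, which would make the sandwich in (\ref{eq:bounds}) too loose to ever force equality. Exhibiting a single such $G$ is the decisive step, and it is precisely the negative phenomenon flagged in the introduction.

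Consequently, I would abandon Lemma \ref{lem:MainLemma} for this theorem and prove the upper bound by a direct induction instead. Using the standard deletion recursion bounding $\reg\left(R/\mathcal{I}(G)\right)$ by the maximum of $\reg\left(R/\mathcal{I}(G\setminus x)\right)$ and $\reg\left(R/\mathcal{I}(G\setminus N[x])\right)+1$, I would take $x$ to be an endpoint of a carefully chosen edge of the perfect matching. The crux is then to check that both $G\setminus x$ and $G\setminus N[x]$ remain very well-covered, or at least lie in an induction-closed class, with $\im(G\setminus N[x])\leq\im G-1$, so that the inductive hypothesis closes the argument. Verifying this stability of the very-well-covered structure under $N[x]$-deletion, rather than any homological input, is where the real work lies.
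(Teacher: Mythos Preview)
The paper does not prove Theorem~\ref{thm:VerywellcoveredReg}; it only quotes it from \cite{Mahmoudi/Mousivand/Crupi/Rinaldo/Terai/Yassemi:2011} and then proves a \emph{negative} result: Lemma~\ref{lem:WhiskeredGraph} shows that for the whiskered graph $W(G)$ one has $\im W(G)=\alpha(G)$ and $\cochordcov W(G)=\chi(\overline{G})$, so the gap can be arbitrarily large even among very well-covered graphs. Hence the sandwich~(\ref{eq:bounds}) is provably too loose to recover Theorem~\ref{thm:VerywellcoveredReg}. Your anticipation of exactly this failure---that step~(ii) breaks and that $\cochordcov G$ can strictly exceed $\im G$---is correct and matches the paper's point precisely; the concrete witness you were looking for is $W(C_5)$, with $\im=2$ and $\cochordcov=3$.

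Your fallback plan (induction via the deletion/closed-neighborhood recursion, choosing $x$ from a perfect matching and tracking that the class is closed under $G\setminus N[x]$) is a plausible outline and is in the spirit of the original argument in \cite{Mahmoudi/Mousivand/Crupi/Rinaldo/Terai/Yassemi:2011}, but there is nothing in \emph{this} paper to compare it against. So: your diagnosis of why the paper's technique fails is exactly right; your proposed alternative proof lies outside the paper's scope and would need the structural lemmas from the cited source to be made rigorous.
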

\noindent We will demonstrate, however, that the gap between $\im G$
and $\cochordcov G$ can be arbitrarily large for very well-covered
graphs. In particular, the proof via (\ref{eq:bounds}) of Theorem
\ref{thm:RegWellcoveredBipartite} cannot be extended to prove Theorem
\ref{thm:VerywellcoveredReg}.\medskip{}

If $G$ is a graph on $n$ vertices, then let $W(G)$ be the graph
on $2n$ vertices obtained by adding a \emph{pendant} (an edge to
a new vertex of degree 1) at every vertex of $G$. This construction
has been previously studied in the context of graphs with Cohen-Macaulay
edge ideals \cite{Villarreal:1990}, where it has been referred to
as \emph{whiskering}; and has been studied in the graph theory literature
as a \emph{corona} \cite{Frucht/Harary:1970}. Because the pendant
vertices form a maximal independent set, it is immediate that $W(G)$
is very well-covered. 
\begin{lem}
\label{lem:WhiskeredGraph} For any graph $G$, we have $\im W(G)=\alpha(G)$
and $\cochordcov W(G)=\chi(\overline{G})$.\end{lem}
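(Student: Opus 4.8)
The plan is to prove the two equalities separately, handling the induced matching number first and the co-chordal cover number second. Write $v_{1},\dots,v_{n}$ for the vertices of $G$ and $u_{i}$ for the pendant vertex attached to $v_{i}$, so that the edges of $W(G)$ are the edges of $G$ together with the pendant edges $\{v_{i},u_{i}\}$.

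For $\im W(G)=\alpha(G)$, I would first establish the lower bound by taking an independent set $S$ of $G$ of size $\alpha(G)$ and checking that the pendant edges $\{v_{i},u_{i}\}$ with $v_{i}\in S$ form an induced matching; this is immediate since the $v_{i}$ are pairwise non-adjacent and each $u_{i}$ has degree one. For the reverse inequality, given any induced matching $\mathcal{M}$ in $W(G)$, I would choose from each edge a representative vertex lying in $V(G)$, which is always possible because every edge of $W(G)$ has an endpoint in $V(G)$. The representatives are distinct since the matching edges are disjoint, and they are pairwise non-adjacent in $G$: an edge of $G$ joining two representatives would be an edge of $W(G)$ between two distinct matching edges, contradicting that $\mathcal{M}$ is induced. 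Hence the representatives form an independent set of $G$ of size $\left|\mathcal{M}\right|$, giving $\im W(G)\le\alpha(G)$.

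For $\cochordcov W(G)=\chi(\overline{G})$, I would use that $\chi(\overline{G})$ equals the minimum number of cliques in a partition of $V(G)$. For the upper bound I would invoke the split-cover construction from the proof of Theorem \ref{thm:SplitCovering}: partitioning $V(W(G))$ into the independent set $\{u_{1},\dots,u_{n}\}$ together with a partition $J_{1},\dots,J_{s}$ of $V(G)$ into cliques with $s=\chi(\overline{G})$, the subgraphs $H_{i}$ of all edges incident to $J_{i}$ are split, hence co-chordal, and they cover $E(W(G))$ since every edge of $W(G)$ meets $V(G)=J_{1}\cup\dots\cup J_{s}$. This yields $\cochordcov W(G)\le\chi(\overline{G})$.

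The heart of the matter, and the step I expect to be the main obstacle, is the lower bound $\cochordcov W(G)\ge\chi(\overline{G})$. Given a co-chordal cover $H_{1},\dots,H_{t}$ of $W(G)$, I would color each vertex $v_{i}$ by the index of some $H_{\ell}$ covering its pendant edge $\{v_{i},u_{i}\}$. The key observation is that if $v_{i}$ and $v_{j}$ receive the same color $\ell$ but are non-adjacent in $G$, then the four vertices $v_{i},u_{i},v_{j},u_{j}$ span exactly the two pendant edges in $W(G)$, so $H_{\ell}$ contains them as an induced $2K_{2}$; since the complement of $2K_{2}$ is the induced $4$-cycle $C_{4}$, this would force an induced $C_{4}$ in $\overline{H_{\ell}}$, contradicting co-chordality of $H_{\ell}$. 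Thus each color class is a clique of $G$, the $t$ classes partition $V(G)$, and therefore $t\ge\chi(\overline{G})$. Combining the two bounds finishes the proof. The only genuine subtlety is the fact that a co-chordal graph contains no induced $2K_{2}$, which is exactly the complementation argument just sketched.
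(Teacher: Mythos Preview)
Your proof is correct and follows essentially the same approach as the paper's. The only cosmetic difference is in the first equality: the paper uses an exchange argument (replace any non-pendant matching edge $\{v,w\}$ by the pendant at $v$) to reduce to pendant-only matchings, whereas you pick a $V(G)$-representative from each matching edge directly; for the second equality your argument is identical to the paper's, just with the ``co-chordal $\Rightarrow$ no induced $2K_{2}$'' step spelled out explicitly.
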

\begin{proof}
For the first equality, we notice that if an induced matching of $W(G)$
contains an edge $\{v,w\}$ of $G$, then we can get a new induced
matching by replacing $\{v,w\}$ with the pendant edge at $v$. Since
a collection of pendant edges forms an induced matching if and only
if the corresponding collection of vertices of $G$ is independent,
the statement follows.

For the second equality, we first notice that a coloring of $\overline{G}$
partitions the vertices of $G$ into cliques, inducing a covering
of $W(G)$ by split graphs (as in Theorem \ref{thm:SplitCovering}).
Hence $\cochordcov W(G)\leq\chi(\overline{G})$. On the other hand,
any co-chordal cover $\left\{ H_{i}\right\} $ of $W(G)$ in particular
covers the pendant edges, and two pendant edges form an induced matching
if the corresponding vertices of $G$ are not connected. Hence a co-chordal
cover induces a covering of the vertices of $G$ by cliques, and thus
$\cochordcov W(G)\geq\chi(\overline{G})$, as desired.
\end{proof}
But then, for example, we have $\im W(C_{5})=2$ and $\cochordcov W(C_{5})=3$.
Moreover, it is well-known that the gap between the clique number
and chromatic number of $\overline{G}$ can be arbitrarily large,
even if $\omega(\overline{G})=\alpha(G)=2$. (See e.g. \cite[Theorem 5.2.5]{Diestel:2005}.)
Hence, the gap between $\im W(G)$ and $\cochordcov W(G)$ can also
be arbitrarily large. \medskip{}

Van Tuyl \cite{VanTuyl:2009} has shown an analogue to Theorem \ref{thm:WellCoveredBipariteCochord}:
that if $G$ is a bipartite graph such that $R/\edgeideal(G)$ is
sequentially Cohen-Macaulay, then $\reg\left(R/\edgeideal(G)\right)=\im G$.
(See his paper \cite{VanTuyl:2009} for definitions and background.)
The following example, however, shows that $\im G$ and $\reg\left(R/\edgeideal(G)\right)$
may also be strictly less than $\cochordcov G$ in this situation. 
\begin{example}
Let $G$ be obtained from $C_{6}$ by attaching a pendant to vertices
$x_{1},x_{2},x_{3},$ and $x_{4}$. It is easy to see from the conditions
given in \cite{VanTuyl:2009} that $\ind G$ is sequentially Cohen-Macaulay.
But an approach similar to that in Lemma \ref{lem:WhiskeredGraph}
will verify that $\im G=2$, while $\cochordcov G=3$.
\end{example}

\subsection{Computational complexity}

An immediate consequence of Lemma \ref{lem:WhiskeredGraph} is that
calculating $\reg\left(R/\edgeideal(G)\right)$ from the graph $G$
is computationally hard:
\begin{cor}
\label{cor:NPcomplete} Given $G$, calculating $\reg\left(R/\edgeideal(G)\right)$
is NP-hard, even if $G$ is very well-covered.\end{cor}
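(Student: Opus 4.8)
The plan is to exhibit a polynomial-time reduction from the problem of computing the independence number $\alpha(G)$, which is a classical NP-hard problem (equivalently, the decision problem ``is $\alpha(G)\ge k$?'' is NP-complete, via the standard reduction from \textsc{Clique}). Given an arbitrary graph $G$ on $n$ vertices as input, I would form the whiskered graph $W(G)$ on $2n$ vertices. Since $W(G)$ is obtained simply by adding one pendant edge at each of the $n$ vertices of $G$, it can be produced from $G$ in time polynomial in the size of $G$. As noted in the discussion preceding Lemma \ref{lem:WhiskeredGraph}, the pendant vertices form a maximal independent set of size $n=|V(W(G))|/2$, so $W(G)$ is very well-covered.

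The heart of the argument is then the chain of identities already assembled in the excerpt. Applying Theorem \ref{thm:VerywellcoveredReg} to the very well-covered graph $W(G)$ gives $\reg(R/\mathcal{I}(W(G)))=\im W(G)$, and Lemma \ref{lem:WhiskeredGraph} identifies $\im W(G)=\alpha(G)$. Hence
\[
\reg\left(R/\mathcal{I}(W(G))\right)=\alpha(G).
\]
Consequently, any algorithm that computes $\reg(R/\mathcal{I}(\,\cdot\,))$ on very well-covered graphs, when run on the polynomially constructible input $W(G)$, returns $\alpha(G)$. This is precisely a polynomial-time reduction (Turing, or many-one on the corresponding decision problems) from \textsc{Maximum Independent Set} to the regularity computation, which establishes NP-hardness even under the stated restriction.

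I expect the reduction itself to be essentially routine, so that the only points requiring attention are matters of bookkeeping rather than genuine obstacles. First, one should phrase the claim at the level of decision problems --- ``is $\alpha(G)\ge k$?'' reduces to ``is $\reg(R/\mathcal{I}(W(G)))\ge k$?'' via the displayed equality --- so that NP-hardness is invoked in its standard form. Second, one must observe that for the graphs $W(G)$ the quantity $\reg(R/\mathcal{I}(W(G)))$ equals the field-independent combinatorial invariant $\im W(G)$, so no ambiguity arises from the possible dependence of regularity on the ground field $k$. With these remarks in place the corollary follows at once.
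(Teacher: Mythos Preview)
Your proposal is correct and follows essentially the same route as the paper: construct $W(G)$ in polynomial time, use that $W(G)$ is very well-covered together with Lemma~\ref{lem:WhiskeredGraph} to obtain $\reg\left(R/\mathcal{I}(W(G))\right)=\im W(G)=\alpha(G)$, and invoke the NP-completeness of deciding $\alpha(G)\geq k$. Your version is simply more explicit about which result yields the first equality (Theorem~\ref{thm:VerywellcoveredReg}) and about the decision-problem formulation and field-independence, all of which the paper leaves implicit.
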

\begin{proof}
One can construct $W(G)$ from $G$ in polynomial time, and $\reg\left(R/\edgeideal(W(G))\right)=\im W(G)=\alpha(G)$.
But checking whether $\alpha(G)\geq C$ is well-known to be NP-complete! 
\end{proof}
Since computing the independence complex of $G$ is already NP-hard,
and as it is hard to imagine finding regularity without computing
the independence complex, Corollary \ref{cor:NPcomplete} is perhaps
not too surprising. It might be of more interest to find the computational
complexity of computing $\reg\left(R/\edgeideal(G)\right)$ from $\ind G$.\smallskip{}

We remark that many of the results we have referenced are from the
computer science literature, and efficient algorithms for finding
$\im G$ and $\cochordcov G$ in special classes of graphs are a main
interest of \cite{Abueida/Busch/Sritharan:2010,Busch/Dragan/Sritharan:2010,Cameron:1989}
and other papers. In particular, given a weakly chordal graph $G$,
we can calculate $\reg(R/\edgeideal(G))=\im G$ in polynomial time
\cite[Corollary 8]{Busch/Dragan/Sritharan:2010}.

In general graphs, however, computing $\im G$ or $\cochordcov G$
is NP-hard: It follows from e.g. the proof of Corollary \ref{cor:NPcomplete}
that determining whether $\im G\geq C$ is NP-complete; while Yannakakis
showed \cite{Yannakakis:1982} that determining whether $\cochordcov(G)\leq C$
is NP-complete. The corresponding problem for split graph covers (as
in Theorem~\ref{thm:SplitCovering}) is also NP-complete \cite{Chernyak/Chernyak:1991}.
An overview of these and similar hardness results can be found in
\cite[Chapter 7]{Mahadev/Peled:1995}.

\subsection{Questions on claw-free graphs}

Nevo \cite{Nevo:2011} showed that if $G$ is a ($2K_{2}$, claw)-free
graph, then $\reg\left(R/\edgeideal(G)\right)\leq2$. Dao, Huneke,
and Schweig \cite{Dao/Huneke/Schweig:2013} have recently given an
alternate proof.  Can the same be shown using Theorem \ref{thm:MainLemma}?
\begin{question}
\label{thm:2K2+ClawFree}If $G$ is ($2K_{2}$, claw)-free, then is
$\cochordcov G\leq2$?
\end{question}
We notice that a cover by split graphs will not suffice: for example,
the Petersen graph $P$ has girth 5, hence $\overline{P}$ is ($2K_{2}$,
claw)-free. But it is easy to verify that no 2 cliques in $\overline{P}$
satisfy the condition of Theorem \ref{thm:SplitCovering}. 

András Gyárfás points out {[}personal communication{]} that in \cite[Problem 5.7]{Gyarfas:1987}
he has asked whether every graph $G$ with $\cochordcov G=2$ has
$\chi(\overline{G})$ bounded by some function of $\alpha(G)$. We
observe that the complement of a graph with girth $\geq5$ is ($2K_{2}$,
claw)-free, with $\alpha(G)=2$. Since a graph with girth $\geq5$
can have arbitrarily large chromatic number \cite[Theorem 5.2.5]{Diestel:2005},
a positive answer to Question \ref{thm:2K2+ClawFree} would imply
a negative answer to Gyárfás' question. 

If the answer to Question \ref{thm:2K2+ClawFree} is negative, then
the following might still be of interest:
\begin{question}
\label{thm:2K_2-freeCoverClaw-free} If $G$ is claw-free, then does
$G$ have a ($2K_{2}$,claw)-free cover by at most $\im G$ subgraphs?
\end{question}
\noindent If Question \ref{thm:2K_2-freeCoverClaw-free} has a positive
answer, then a direct application of (\ref{eq:Kalai-Meshulam_EdgeIdeal})
would then imply that for a claw-free graph $G$ we have $\reg\left(R/\edgeideal(G)\right)\leq2\cdot\im G$.

\emph{After acceptance of the paper, Shahab Haghi and Siamak Yassemi
pointed out to me by email {[}private communication{]} that Question
\ref{thm:2K_2-freeCoverClaw-free} has a negative answer for the cyclic
graph $C_{8}$. So far as I am aware, the question remains open as
to whether $\reg(R/I(G))\leq2\cdot\im G$ for any claw-free graph
$G$.}

\section*{Acknowledgements}

I am grateful to R.~Sritharan for making me aware of the relevance
of \cite{Abueida/Busch/Sritharan:2010} and \cite{Busch/Dragan/Sritharan:2010},
as well as for several stimulating conversations. Chris Francisco,
András Gyárfás, Huy Tài Hà, Craig Huneke, and Adam Van Tuyl have made
helpful comments and suggestions. I have benefited greatly from the
advice and encouragement of John Shareshian. 

\bibliographystyle{hamsplain}
\bibliography{4_Users_russw_Documents_Research_Master}

\end{document}